\documentclass[11pt, a4paper]{article}
\usepackage{microtype}
\usepackage{amsmath}
\usepackage{amssymb, units, amsthm}
\usepackage[english]{babel}
\usepackage[cp1250]{inputenc}
\usepackage{color}
\usepackage{pgf}

\usepackage[numbers,sort&compress]{natbib}
\setlength{\bibsep}{1pt}

\allowdisplaybreaks 

\setlength{\oddsidemargin}{-0.5 cm}
\setlength{\topmargin}{-0.5 cm}
\setlength{\textwidth}{17 cm}
\setlength{\textheight}{22.5 cm}

\DeclareMathOperator{\dir}{div}

\DeclareMathOperator{\spn}{span}

\newcommand{\dfe}{\mathrel={\mathop:} \,}
\newcommand{\ddf}{\mathrel{\mathop:}=}

\def \0{\boldsymbol{0}}

\def \e{\varepsilon}

\def \hex{H_{\text{ext}}}
\def \ii{\int_{\Omega}}

\def \N{\mathbb{N}}
\def \n{\boldsymbol{n}}
\def \nn{\nabla}

\def \om{\Omega}

\def \R{\mathbb{R}}

\newtheoremstyle{vety}	
  {5mm}			
  {5mm}			
  {\itshape}		
  {}			
  {}			
  {\bf}		
  {.5em}		
  {\thmname{\textbf{#1}}\thmnumber{\textbf{ #2}}\thmnote{\textnormal{ (#3)}}}

\theoremstyle{vety}
\newtheorem{thm}{Theorem}
\newtheorem{lem}[thm]{Lemma}

\newtheoremstyle{definice}	
  {5mm}				
  {5mm}				
  {\normalfont}			
  {}				
  {}				
  {\bf}			
  {.5em}			
  {\thmname{\textbf{#1}}\thmnumber{\textbf{ #2}}\thmnote{\textnormal{ (#3)}}}

\makeatletter
\newcommand\ackname{Acknowledgements}
\if@titlepage
  \newenvironment{acknowledgements}{%
      \titlepage
      \null\vfil
      \@beginparpenalty\@lowpenalty
      \begin{center}%
        \bfseries \ackname
        \@endparpenalty\@M
      \end{center}}%
     {\par\vfil\null\endtitlepage}
\else
  
\fi
\makeatother

\makeatletter
\newcommand\keyname{Keywords}
\if@titlepage
  \newenvironment{keywords}{%
      \titlepage
      \null\vfil
      \@beginparpenalty\@lowpenalty
      \begin{center}%
        \bfseries \keyname
        \@endparpenalty\@M
      \end{center}}%
     {\par\vfil\null\endtitlepage}
\else
  \newenvironment{keywords}{%
      \if@twocolumn
        \section*{\abstractname}%
      \else
        \small
        \begin{center}%
          {\bfseries \keyname\vspace{-.5em}\vspace{\z@}}%
        \end{center}%
        \quotation
      \fi}
      {\if@twocolumn\else\endquotation\fi}
\fi
\makeatother

\theoremstyle{definice}
\newtheorem{df}[thm]{Definition}
\newtheorem{rem}[thm]{Remark}

\begin{document}
\title{Uniqueness of solutions for a mathematical model for magneto-viscoelastic flows}

\author{
A.\ Schl\"omerkemper\footnote{Corresponding author: \tt{anja.schloemerkemper@mathematik.uni-wuerzburg.de}}, 
J.\ \v Zabensk\'y\footnote{\tt{josef.zabensky@mathematik.uni-wuerzburg.de}} 
\\ \\ 
\multicolumn{1}{p{.8\textwidth}}
{\centering University of W\"urzburg, Institute of Mathematics, 
Emil-Fischer-Stra\ss e 40, 97074 W\"urzburg, Germany}
}

\date{\today}

\maketitle

\begin{abstract}
\noindent We investigate uniqueness of weak solutions for a system of partial differential equations capturing behavior
of magnetoelastic materials. This system couples the Navier-Stokes equations with evolutionary equations for the deformation gradient and for the magnetization obtained from a special case of the micromagnetic energy.
It turns out that the conditions on uniqueness coincide with those for the well-known Navier-Stokes equations in bounded domains: weak solutions are unique in two spatial dimensions, and weak solutions satisfying the Prodi-Serrin conditions are unique among all weak solutions in three dimensions. That is, we obtain the so-called weak-strong uniqueness result in three spatial dimensions.

\end{abstract}

\begin{keywords}
 \noindent Uniqueness of solutions, weak solution, weak-strong uniqueness, magnetoelastic materials, magnetoelasticity, Prodi-Serrin conditions
\end{keywords}


\section{Introduction}

Magnetoelastic materials have various applications as fluids and solids in sensors and actuators due to the coupling of magnetic and elastic effects. In particular, such materials respond mechanically to applied magnetic fields and they change their magnetic properties in response to mechanical stresses. The aim of this work is to study uniqueness of weak solutions of a corresponding evolutionary system of partial differential equations in the incompressible case.

Let $\om \subset \R^d$ be a bounded regular domain for $d=2$ or $3$, let $T>0$ be a given time
and denote $Q_T \ddf (0,T) \times \om$ for brevity. Let $v: Q_T \to \R^d$ be the velocity field, $F: Q_T \to \R^{d\times d}$ the deformation gradient, and $M: Q_T\to \R^3$ the magnetization vector, all described in Eulerian coordinates.  The magnetoelastic material is exposed to an external magnetic field $H_{\text{ext}}: Q_T\to\R^3$. We consider a system in which the behavior of the material is governed by the following set of balance equations
\begin{align} \label{prob1} 
v_t + (v \cdot \nabla) v + \nn p + \dir (\nn^\text{T} \! M \, \nn M ) - \dir (FF^\text{T}) & = \nu \Delta v + \nabla^\text{T} H_{\text{ext}} M, \\
\dir v &= 0, \phantom{\frac{1}{\mu^2}} \label{prob2}\\
F_t + (v \cdot \nabla)F - \nn v F & = \kappa \Delta F, \label{prob3}\\
M_t + (v \cdot \nabla)M & = \Delta M + H_{\text{ext}} - \frac{1}{\mu^2}(|M|^2 - 1)M, \label{prob4}
\end{align}
holding in $Q_T$, with the boundary conditions $v=0$, $F=0$ and $\frac{\partial M}{\partial n}=0$ on $(0,T) \times \partial \om$, where $n$ stands for the unit outer normal to $\om$. We complete the system with appropriate initial data specified below.

Existence of weak solutions to the above system 
was derived in \cite{ForsterDiss} for slightly different boundary conditions and $\hex = 0$, cf.\ also \cite{PAMM2015}. For the convenience of the reader we will present the main steps of the existence proof for system \eqref{prob1}--\eqref{prob4} in Appendix~\ref{sec:exist}. The main focus of this article is on uniqueness of weak solutions: In the two-dimensional case ($d=2$), weak solutions turn out to be unique; see Theorem~\ref{thm1}. The proof relies on the fact that the weak solutions may be used as test functions in the weak formulation of \eqref{prob1}--\eqref{prob4} in two spatial dimensions (see discussion under~\eqref{enineq} for a precise explanation). In three dimensions ($d=3$), this is no longer the case, which is why the proof cannot be transferred. Instead we prove a kind of
weak-strong uniqueness, see Theorem~\ref{unik}. A strong solution of the system is a weak solution that satisfies certain additional
regularity conditions. Any such strong solution turns out to be unique among all weak solutions, the property being referred to as weak-strong uniqueness.

A distinguishing feature of the system under consideration is interconnection  of viscoelasticity with magnetism, which results in the incompressible Navier-Stokes equations coupled with balance equations for the deformation gradient $F$ and the magnetization $M$. The resulting nonlinear terms present challenges in pursuit of existence and uniqueness of weak solutions to the system. Nevertheless, it turns out that uniqueness for the coupled system can still be shown by means of the energy method, see Sections~\ref{sec:uni2d} and~\ref{sec:uni3d}.

From the modeling point of view, a difficulty related to the coupled magnetic and elastic systems is that magnetism is phrased in Eulerian coordinates (current configuration) and elasticity in Lagrangian ones (reference configuration, also referred to as initial or undeformed configuration). There are various approaches in the literature (see e.g.~\cite{DeSimoneDolzmann1998,DeSimoneJames2002, JamesKinderlehrer1993}) for variational approaches in the static case. Rate-independent evolution models are studied in \cite{KruzikStefanelliZeman2015}. There are also results available for models which couple the Landau-Lifshitz-Gilbert equation with elasticity in the small strain setting; see \cite{CarbouEfendievFabrie2011,ChipotShafrir_etal2009}. Most closely related to our system \eqref{prob1}--\eqref{prob4} is the system studied in \cite{BenesovaForster_etal2016}, cf.\ also \cite{ForsterDiss}. In their model, \eqref{prob4} is replaced by the Landau-Lifshitz-Gilbert equation, which is more involved than the gradient flow~\eqref{prob4} due to further nonlinearities. Although the existence of weak solutions has been proved, the question of their uniqueness will be investigated in our
future work.

Following \cite{LiuWalkington2001}, we transform all physical quantities to the Eulerian system, which has the advantage that we do not have to deal with invertibility issues of the corresponding deformation mapping later and that it will be easier to extend our model to composite materials. The derivation of \eqref{prob1}--\eqref{prob4} is obtained from a variational approach outlined in Appendix~\ref{sec:deriv}. The system includes micromagnetism, which is a variational theory that allows the magnetization to form microstructures; see the reviews \cite{DKMO,Garcia-Cervera2007,KruzikProhl2006}. 

Note that if $M\equiv 0$, the system \eqref{prob1}--\eqref{prob4} reduces to a model for incompressible viscoelastic flows, cf.\ \cite{HuWu2015,LinLiuZhang2005,LiuWalkington2001}. The idea of regularizing the evolution equation for the deformation gradient~\eqref{prob3} goes back to \cite{LinLiu1995}. For an overview of analytical results for viscoelastic flows we refer to the introduction of~\cite{HuWu2015}, where the system is studied without the regularization, yet under the assumption that the initial data are close to equilibrium.

The situation is similar in spirit to that for the Navier-Stokes equations, where one has uniqueness of weak solutions in two dimensions and weak-strong uniqueness in three spatial dimensions. The regularity of the strong solution is prescribed by the Prodi-Serrin conditions, cf., e.g., \cite{MR0126088,MR0316915} or~\cite[p.~298]{temam1977navier}. That is, to show that a given weak solution is unique, it \emph{suffices} to show e.g.\ that it satisfies the Prodi-Serrin conditions.  As is well-known, uniqueness depends on the geometry of the spatial domain; there are examples of non-uniqueness in certain domains; see \cite{Heywood}. 

Similar problems have been discussed in related but different systems of partial differential equations in recent years. For instance, weak-strong uniqueness is shown for incompressible viscoelastic flows \cite{HuWu2015}, for the flow of nematic liquid crystals \cite{PaicuZarnescu2011}, in 3D incompressible Navier-Stokes equations with damping~\cite{Zhou2012}, and compressible, viscous, and heat conducting fluids 
\cite{FeireislJinNovotny2012}. In the context of measure-valued solutions, weak-strong uniqueness is studied for instance for polyconvex elastodynamics~\cite{DemouliniStuartTzavaras2012}, and for compressible fluid models in~\cite{GwiazdaSwierczewska-GwiazdaWiedemann2015}. Moreover, uniqueness results are also known for the magneto-hydrodynamic equations, cf.~\cite{ShiZhang2016}.

To avoid unnecessarily cluttered presentation, in the entire paper we will assume that the external forces are zero, i.e.~$\hex = 0$. Had this quantity been present, we would only need to change inequality~\eqref{enineq} below in the corresponding manner and then, in the proof itself, we would treat a few more integral terms in the very same way as we do already. The proof would have been slightly longer, but it would offer no new ideas.

The outline of the remaining part of the paper is as follows: In the next, second section, we state the main results on existence and uniqueness of weak solutions. In the third section we prove uniqueness in two dimensions and in the fourth one we show weak-strong uniqueness in three dimensions.  In Appendix~\ref{sec:deriv} we comment on the derivation of the model \eqref{prob1}--\eqref{prob4}. In Appendix~\ref{sec:exist} we then provide a skeleton proof of the existence of weak solutions.


\section{The result}

Before we formulate the announced theorem on uniqueness, let us first acquaint the reader with our notation. It is mostly standard or self-explanatory; we only recall 
\begin{align*}
W_{0,\dir}^{1,2}(\om) &\ddf \bigl\{ \varphi \in W^{1,2}(\om;\R^d)\mid \dir \varphi = 0 \text{ in $\om$}, \varphi = 0 \text{ on $\partial \om$ in the sense of traces} \bigr\},\\
W_n^{2,2}(\om)&\ddf \bigl\{ \varphi \in W^{2,2}(\om;\R^3)\mid \frac{\partial \varphi}{\partial n}=0 \text{ on $\partial \om$ in the sense of traces} \bigr\}.
\end{align*}
No explicit distinction between spaces of scalar-, vector-, or tensor-valued functions will be made, for confusion should never occur. Generic constants used in estimates are denoted by $C$ and their value can vary between the lines.

Next we define the notion of a weak solution to the investigated system:
\begin{df} \label{defwsol}
Let $T>0$, $v_0\in L^2_{\dir}(\Omega;\R^d)$, $F_0\in L^2(\Omega;\R^{d\times d})$ and $M_0\in W^{1,2}(\Omega;\R^3)$. Denote $r\ddf 2$ if $d=2$ and $r \ddf 4/3$ if $d=3$. We call $(v,F,M): Q_T \to \R^d \times \R^{d\times d} \times \R^3$ a weak solution to \eqref{prob1}--\eqref{prob4}, if
\begin{align}
v & \in L^{\infty}(0,T;L^2(\om)) \cap L^2(0,T;W_{0,\dir}^{1,2}(\om)), \quad &v_t & \in L^r(0,T;(W^{1,2}_{0,\dir}(\om))^*),\nonumber \\
F & \in L^{\infty}(0,T;L^2(\om)) \cap L^2(0,T;W_0^{1,2}(\om)), \quad &F_t & \in L^r(0,T;(W^{1,2}_0(\om))^*),\label{demo} \\ \nonumber
M & \in L^{\infty}(0,T;W^{1,2}(\om)) \cap L^2(0,T;W_n^{2,2}(\om)), \quad & M_t & \in L^r(0,T;L^2(\om)),
\end{align}
equations \eqref{prob1}, \eqref{prob3} and \eqref{prob4} are satisfied in the sense of distributions on $Q_T$ (\eqref{prob1} only for solenoidal test functions, so that we need not have to bother with the pressure $p$, whose treatment we neglect completely) and the inequality 
\begin{align}\nonumber
 \| v(t) \|_{L^2(\om)}^2 & +\| F(t) \|_{L^2(\om)}^2 +\| M(t) \|_{L^2(\om)}^2 +\| \nn M(t) \|_{L^2(\om)}^2 \\
 \label{enineq} & + 2 \int_{Q_t} \big( \nu | \nn v |^2 + \kappa | \nn F |^2+ | \nn M |^2 + | \Delta M |^2  + \frac{1}{\mu^2}(|M|^2-1)M\cdot (M- \Delta M) \big) \, dx\,ds \\ 
 \nonumber & \qquad \qquad  \le \| v_0\|_{L^2(\om)}^2 + \|F_0\|_{L^2(\om)}^2 + \|M_0\|_{L^2(\om)}^2 + \|\nn M_0\|_{L^2(\om)}^2   
\end{align}
holds for a.e.\ $t \in (0,T)$. 

The boundary conditions are assumed to be attained in the form
\begin{align} \label{icond}
\lim_{t \to 0_+} \big( \|v(t)-v_0 \|_{L^2(\om)} +\|F(t)-F_0 \|_{L^2(\om)} +\|M(t)-M_0 \|_{W^{1,2}(\om)} \big)=0 .
\end{align}
\end{df}
Instead of~\eqref{enineq}, we could alternatively require an inequality in the vein of~\cite[(3.111)]{ForsterDiss}. The latter version, although a true \emph{energy inequality} (unlike our case), would on the one hand make the definition of weak solutions much closer to the well-known Leray-Hopf solutions to the Navier-Stokes equations, yet on the other hand, it would be much less practical in our proof. For this reason we chose the \emph{not-quite-energy} inequality~\eqref{enineq} in our definition. By the way, this inequality is unnecessary to impose explicitly when $d=2$, since then regularity~\eqref{demo} implies that we may test the equation for linear momentum~\eqref{prob1} by $v$, equation~\eqref{prob3} by $F$ and equation~\eqref{prob4} by $M-\Delta M$, yielding in the end even equality in~ \eqref{enineq}. 

Validity of \eqref{enineq} is no longer obvious in three dimensions: \eqref{prob1}--\eqref{prob4} and \eqref{demo} then dictate that we would need also $v \in L^4(0,T;L^4(\om;\R^d))$, $F \in L^4(0,T;L^4(\om;\R^{d\times d}))$ and $\nn M \in L^4(0,T; L^4(\om;\R^{3\times d}))$ to use $v$, $F$ and $\Delta M$ as the respective test functions, which is more than the definition assures (see Lemma~\ref{unreal}). Actually, inequality~\eqref{enineq} in three dimensions is a consequence of its being satisfied as equality by approximate regular solutions and weak/weak$^*$ lower semicontinuity of the norm. As a sidenote, the only difference between~\eqref{enineq} and the said true energy inequality, lies in testing equation~\eqref{prob4} by its entire right-hand side in lieu of simply $M-\Delta M$.

Having got acquainted with the concept of weak solutions, we present the theorem on existence of weak solutions, whose proof is sketched in Appendix~\ref{sec:exist}.

\begin{thm}\label{thm:exist} Let $d=2,3$ and $\Omega\subset \R^d$ be a $C^\infty$ domain. Then the system \eqref{prob1}--\eqref{prob4} possesses a weak solution $(v, F, M)$ in the sense of Definition~\ref{defwsol}.
\end{thm}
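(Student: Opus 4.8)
The plan is to obtain the weak solution as a limit of a sequence of regularized, finite-dimensional approximations, following the standard Galerkin--Faedo scheme adapted to the coupled system, and then pass to the limit using compactness. Because the equations for $v$ and $F$ share the natural energy space $L^\infty(0,T;L^2)\cap L^2(0,T;W^{1,2})$ while $M$ lives one derivative higher, I would set up three separate Galerkin bases: for $v$ the eigenfunctions of the Stokes operator in $W^{1,2}_{0,\dir}(\om)$, for $F$ the eigenfunctions of the Dirichlet Laplacian (acting componentwise), and for $M$ the eigenfunctions of the Neumann Laplacian in $W^{2,2}_n(\om)$. Projecting \eqref{prob1}, \eqref{prob3}, \eqref{prob4} onto the first $N$ modes gives a system of ODEs with locally Lipschitz right-hand side (all nonlinearities are polynomial in the unknowns and their first derivatives, which are finite-dimensional hence smooth), so Carath\'eodory/Picard--Lindel\"of yields a local-in-time solution $(v^N,F^N,M^N)$.

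The crucial second step is the a priori energy estimate that both makes the Galerkin solutions global and provides the bounds needed for compactness. One tests the projected momentum equation by $v^N$, the projected deformation-gradient equation by $F^N$, and the projected magnetization equation by $M^N-\Delta M^N$ (legitimate at the Galerkin level since everything is a finite linear combination of smooth basis functions). Here the coupling terms are designed to cancel: the $\dir(\nn^{\mathrm T}M\,\nn M)$ term in \eqref{prob1} cancels against the $(v\cdot\nabla)M$-type contribution from testing \eqref{prob4}, and the $\dir(FF^{\mathrm T})$ term in \eqref{prob1} cancels against the $\nn v\,F$ term from testing \eqref{prob3} — this is precisely the algebraic structure that produces \eqref{enineq} as an identity. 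The nonpositivity (up to lower order) of the Ginzburg--Landau term $\frac{1}{\mu^2}(|M|^2-1)M\cdot(M-\Delta M)$ is handled by splitting off $|\nabla M|^2$ terms with the right sign and absorbing the genuinely-bad piece $-\frac{1}{\mu^2}|M|^2$ on the right via Gronwall. This delivers uniform bounds on $v^N,F^N$ in $L^\infty_tL^2_x\cap L^2_tW^{1,2}_x$ and on $M^N$ in $L^\infty_tW^{1,2}_x\cap L^2_tW^{2,2}_x$, and then, reading the equations backwards, uniform bounds on the time derivatives in the dual spaces of \eqref{demo} (using $r=2$ in 2D and $r=4/3$ in 3D, the latter dictated by the $L^{4/3}_tL^{4/3}_x$ control of $(v\cdot\nabla)v$ coming from the Ladyzhenskaya--Gagliardo--Nirenberg interpolation $L^\infty_tL^2_x\cap L^2_tW^{1,2}_x\hookrightarrow L^{8/3}_{t,x}$ in $d=3$).

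The third step is the passage to the limit $N\to\infty$. By Banach--Alaoglu one extracts weakly(-$*$) convergent subsequences; by Aubin--Lions--Simon, using the time-derivative bounds, one upgrades to strong convergence $v^N\to v$, $F^N\to F$ in $L^2(0,T;L^2(\om))$ (indeed in $L^p_{t,x}$ for some $p>2$ by interpolation) and $M^N\to M$ in $L^2(0,T;W^{1,2}(\om))$, which is exactly enough to pass to the limit in every nonlinear term: the quadratic convection terms, $\nn v\,F$, $\dir(FF^{\mathrm T})$, $\dir(\nn^{\mathrm T}M\nn M)$, and the cubic $|M|^2M$ (using $M\in L^\infty_tW^{1,2}_x\hookrightarrow L^\infty_tL^6_x$ in 3D so $|M|^2M\in L^\infty_tL^2_x$). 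The energy inequality \eqref{enineq} for the limit follows from the Galerkin identity by weak/weak-$*$ lower semicontinuity of the norms on the left-hand side. The initial conditions in the form \eqref{icond} follow from the $C([0,T];L^2_{\mathrm{weak}})$ regularity (consequence of the time-derivative bounds) together with the energy inequality evaluated as $t\to0_+$, a standard argument promoting weak to strong convergence at $t=0$; the $W^{1,2}$-convergence of $M(t)\to M_0$ uses in addition the $L^2_tW^{2,2}_x$ bound on $M$.

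I expect the main obstacle to be the a priori estimate, specifically verifying that the cross-terms coupling the three equations cancel exactly (or are controlled), since this requires carefully keeping track of integrations by parts involving $\nn^{\mathrm T}M\,\nn M$, $FF^{\mathrm T}$ and $\nn v\,F$, and exploiting $\dir v=0$; any sign error there destroys the global bound. A secondary technical point, already flagged in the excerpt (see Lemma~\ref{unreal} and the discussion around \eqref{enineq}), is that the test functions $v$, $F$, $\Delta M$ are \emph{not} admissible in the limiting weak formulation in $d=3$ — so \eqref{enineq} genuinely must be inherited from the approximation level rather than derived a posteriori, and one must resist the temptation to re-derive it directly. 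Everything else — existence for the Galerkin ODE, extraction of subsequences, identification of limits in the nonlinear terms, attainment of initial data — is routine given the uniform bounds. Since the present paper's focus is uniqueness, I would relegate the details to Appendix~\ref{sec:exist} and present only this skeleton.
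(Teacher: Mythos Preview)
Your proposal is correct and yields a valid proof, but the construction you choose differs from the paper's in one structural respect. You discretize all three unknowns $(v,F,M)$ simultaneously by three Galerkin bases and appeal to local ODE existence for the resulting finite-dimensional system. The paper instead discretizes \emph{only} the velocity: for a given $v_m$ in the span of the first $m$ Stokes eigenfunctions it solves the full PDEs \eqref{prob3}--\eqref{prob4} for $(F_m,M_m)$, feeds these back into the discretized momentum equation \eqref{bindestrich}, and closes the loop via a Schauder fixed-point argument on the map $v_m\mapsto\widetilde v_m$. A second, minor difference is the test function used for the magnetization equation at the approximate level: you test by $M^N-\Delta M^N$, which produces \eqref{enineq} directly (with the Ginzburg--Landau remainder handled by Gronwall, as you indicate), whereas the paper tests by the full right-hand side $-\Delta M_m+\tfrac{1}{\mu^2}(|M_m|^2-1)M_m$, obtaining the cleaner ``true'' energy inequality with the quartic potential $\tfrac{1}{4\mu^2}(|M_m|^2-1)^2$ as part of the stored energy.

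Your route is more elementary for the present system---no fixed-point argument, and the cross-term cancellations survive intact because $v^N$, $F^N$, and $\Delta M^N$ all lie in the respective finite-dimensional spans, so the Galerkin projections drop out when testing. The paper's semi-discrete scheme is inherited from \cite{BenesovaForster_etal2016,ForsterDiss}, where the magnetization satisfies the Landau--Lifshitz--Gilbert equation; in that setting a full Galerkin projection of the $M$-equation is less convenient, which motivates keeping \eqref{prob3}--\eqref{prob4} at the PDE level and decoupling via a fixed point. For the gradient-flow equation \eqref{prob4} treated here, either construction works, and the remaining steps---uniform bounds, time-derivative estimates with $r=2$ or $r=4/3$, Aubin--Lions compactness, passage to the limit in the nonlinearities, and attainment of initial data---are essentially identical in both approaches.
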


The main purpose of this paper is to prove the following two statements on uniqueness of the weak solutions found above, which will be proved in the subsequent sections.

\begin{thm} \label{thm1}
When $d=2$, weak solutions are unique.
\end{thm}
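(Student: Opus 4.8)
The plan is to argue by contradiction via the energy method: suppose $(v_1,F_1,M_1)$ and $(v_2,F_2,M_2)$ are two weak solutions emanating from the same initial data $(v_0,F_0,M_0)$, set $u \ddf v_1-v_2$, $G \ddf F_1-F_2$, $N \ddf M_1-M_2$, and derive a differential inequality for the quantity
\begin{align*}
y(t) \ddf \|u(t)\|_{L^2(\om)}^2 + \|G(t)\|_{L^2(\om)}^2 + \|N(t)\|_{W^{1,2}(\om)}^2
\end{align*}
of the form $y'(t) \le \Lambda(t)\, y(t)$ with $\Lambda \in L^1(0,T)$, whence Gronwall together with $y(0)=0$ (which follows from~\eqref{icond}) forces $y\equiv 0$ on $(0,T)$. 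First I would subtract the weak formulations of~\eqref{prob1},~\eqref{prob3},~\eqref{prob4} for the two solutions, test the momentum difference by $u$, the deformation-gradient difference by $G$, and the magnetization difference by $N-\Delta N$, and add them. The crucial point, as the excerpt itself stresses under~\eqref{enineq}, is that in $d=2$ the regularity~\eqref{demo} is sufficient to legitimize using the solutions themselves (and hence their differences) as test functions — this is exactly where two spatial dimensions is used, and I would invoke that fact rather than re-prove it.

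Next I would organize the resulting identity term by term. The dissipative terms $2\nu\|\nn u\|_{L^2}^2$, $2\kappa\|\nn G\|_{L^2}^2$, $2\|\nn N\|_{L^2}^2 + 2\|\Delta N\|_{L^2}^2$ appear with a good sign on the left; everything else must be absorbed into $\Lambda(t)y(t)$ plus a small multiple of these dissipative terms. The genuinely nonlinear contributions are: the convective terms, which after the usual manipulation reduce to $\int_\om (u\cdot\nn)v_2\cdot u$ and analogues for $G$ and $N$ (the terms with $v_1$ in the transport slot vanish by solenoidality of $v_1$ and the boundary condition); the elastic coupling $\dir(FF^{\mathrm T})$ and $\nn v\, F$ terms, which pair up and, after the standard viscoelastic cancellation going back to Lin–Liu, produce cross terms like $\int_\om (\nn u\, F_2)\cdot G$; the Ericksen-type stress $\dir(\nn^{\mathrm T}M\,\nn M)$, whose difference contributes terms involving $\nn N$, $\nn M_i$ and $u$; and the polynomial term $\frac1{\mu^2}(|M|^2-1)M$, whose difference is $\frac1{\mu^2}\big[(|M_1|^2-1)M_1 - (|M_2|^2-1)M_2\big]$, a cubic that is locally Lipschitz and yields terms controlled by $(\|M_1\|_{L^\infty}^2+\|M_2\|_{L^\infty}^2+1)\|N\|_{L^2}$-type expressions after pairing with $N-\Delta N$. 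For each such term I would apply Hölder, then the Gagliardo–Nirenberg / Ladyzhenskaya inequality $\|w\|_{L^4(\om)}^2 \le C\|w\|_{L^2(\om)}\|\nn w\|_{L^2(\om)}$ valid in $d=2$, and finally Young's inequality, to split it into $\varepsilon(\text{dissipative terms}) + C_\varepsilon(\text{coefficients})\cdot y(t)$; the coefficients collect $L^2$-in-time norms of $\nn v_2$, $\nn F_2$, $\Delta M_i$, etc., which are finite by~\eqref{demo}, so $\Lambda\in L^1(0,T)$. Choosing $\varepsilon$ small enough to be swallowed by the left-hand side closes the estimate.

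I expect the main obstacle to be the terms coming from the magnetic–elastic couplings, specifically the difference of the Ericksen stress $\dir(\nn^{\mathrm T}M\,\nn M)$ together with the $\Delta N$ appearing on the test-function side for $N$ — one gets quadratic-in-gradient quantities such as $\int_\om \nn N : (\nn M_1 + \nn M_2) : \nn N$ or $\int_\om (u\cdot\nn)N : \nn M_i$ that at first glance seem to need control of $\nn M_i$ in $L^4$ in space-time, which~\eqref{demo} does \emph{not} directly give (cf.\ the remark preceding Lemma~\ref{unreal}). The resolution is to exploit that $M_i\in L^2(0,T;W^{2,2}(\om))\cap L^\infty(0,T;W^{1,2}(\om))$, so by interpolation $\nn M_i \in L^4(0,T;L^4(\om))$ in $d=2$ (indeed $\nn M_i\in L^4(0,T;W^{1,2}(\om))\hookrightarrow L^4(0,T;L^\infty)$ is even better here), and likewise to put the "bad" gradient factor on $\Delta N$, whose square sits on the good side. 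Careful bookkeeping of which factor carries the time-integrable high norm and which carries $y(t)$ is the delicate part; once that allocation is made consistently, the Gronwall argument is routine.
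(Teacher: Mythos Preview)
Your proposal is correct and matches the paper's strategy: subtract the two systems, test with $(u,G,N-\Delta N)$, invoke the 2D Ladyzhenskaya inequality together with Young to reduce all cross terms to $\Lambda(t)y(t)$ with $\Lambda\in L^1$, and finish by Gronwall. The one place you diverge slightly is the Ericksen stress: the paper pairs $\int\dir(\nn^{\mathrm T}M^1\nn M^1-\nn^{\mathrm T}M^2\nn M^2)\cdot u$ from the momentum equation with the convective contribution $-\int((v^1\cdot\nn)M^1-(v^2\cdot\nn)M^2)\cdot\Delta N$ arising from the test function $-\Delta N$ (its $I_1+I_7$), and an algebraic cancellation collapses the sum to terms already quadratic in the differences, which are then estimated using only $\Delta M^2\in L^2_{t,x}$ and $v^2\in L^4_{t,x}$. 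Your route of estimating these pieces separately via $\nn M_i\in L^4(0,T;L^4(\om))$ is also valid in $d=2$; just be aware that your parenthetical upgrade ``$\nn M_i\in L^4(0,T;W^{1,2})\hookrightarrow L^4(0,T;L^\infty)$'' fails on both counts---interpolation between $L^\infty W^{1,2}$ and $L^2 W^{2,2}$ gives only $M_i\in L^4 H^{3/2}$, hence $\nn M_i\in L^4 H^{1/2}$, and $W^{1,2}\not\hookrightarrow L^\infty$ in two dimensions anyway---but this does not matter since $L^4 L^4$ is all you need.
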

In three dimensions, we have a weak-strong type of uniqueness in the vein of Navier-Stokes' equations, dictated by the Prodi-Serrin conditions: 
\begin{thm} \label{unik}
Let $d=3$, $(v^1,F^1,M^1)$ be a weak solution and  $(v^2,F^2,M^2)$ a weak solution with the same initial data, additionally satisfying Prodi-Serrin conditions $|v^2|, |F^2|, |\nn M^2| \in L^r(0,T;L^{s}(\om))$, where $\frac{2}{r}+\frac{3}{s} = 1$ for some $s \in (3, \infty)$. Then $(v^1,F^1,M^1) = (v^2,F^2,M^2)$ a.e.\ in $Q_T$.
\end{thm}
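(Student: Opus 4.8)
The plan is to set up the standard energy estimate for the difference of the two solutions. Write $w \ddf v^1 - v^2$, $G \ddf F^1 - F^2$, $N \ddf M^1 - M^2$, subtract the weak formulations of \eqref{prob1}, \eqref{prob3}, \eqref{prob4} for the two solutions, and formally test the $w$-equation by $w$, the $G$-equation by $G$, and the $N$-equation by $N - \Delta N$. Since $(v^2,F^2,M^2)$ enjoys the Prodi-Serrin regularity, it is legitimate to use it as a test function in the weak formulation of $(v^1,F^1,M^1)$ (and vice versa), so the cross terms are justified even though $w$ itself is only a weak-solution-class object; this is exactly the place where the extra integrability of the strong solution is consumed, mirroring the Navier-Stokes argument. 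The inequality \eqref{enineq} for $(v^1,F^1,M^1)$ together with the energy \emph{equality} available for the regular solution $(v^2,F^2,M^2)$ then yields a differential inequality of the form
\begin{align*}
\frac{d}{dt}\, y(t) \le C\, \Phi(t)\, y(t), \qquad y(t) \ddf \|w(t)\|_{L^2(\om)}^2 + \|G(t)\|_{L^2(\om)}^2 + \|N(t)\|_{W^{1,2}(\om)}^2,
\end{align*}
where $\Phi \in L^1(0,T)$ collects the Prodi-Serrin norms of $v^2$, $F^2$, $\nn M^2$ together with the $L^2(0,T;W^{1,2})$-type norms of the weak solution; since $y(0)=0$ by \eqref{icond}, Gronwall's lemma forces $y \equiv 0$.

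The heart of the matter is to control the nonlinear difference terms. For the convective terms $(v^1\cdot\nn)v^1 - (v^2\cdot\nn)v^2 = (w\cdot\nn)v^2 + (v^2\cdot\nn)w$ modulo solenoidality, testing by $w$ kills the second term and leaves $\int_\om (w\cdot\nn)v^2\cdot w$, which is estimated by $\|\nn w\|_{L^2}\|w\|_{L^{2s/(s-2)}}\|v^2\|_{L^s}$ and then, via the Gagliardo-Nirenberg interpolation $\|w\|_{L^{2s/(s-2)}} \lesssim \|w\|_{L^2}^{1-3/s}\|\nn w\|_{L^2}^{3/s}$ together with Young's inequality with exponent tuned by $\frac2r+\frac3s=1$, absorbed into $\nu\|\nn w\|_{L^2}^2$ up to a factor $\|v^2\|_{L^s}^r\|w\|_{L^2}^2$. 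The elastic terms require the same treatment: $\dir(F^1 (F^1)^{\mathrm T}) - \dir(F^2(F^2)^{\mathrm T})$ and the stretching term $\nn v^1 F^1 - \nn v^2 F^2$ split into a "good" part and parts of the form $G\,F^2$ or $\nn w\,F^2$, handled exactly as the convective term using $F^2 \in L^r(0,T;L^s)$; here one uses the cancellation between the divergence term in the $w$-equation and the stretching term in the $G$-equation (this is the classical structural cancellation for the regularized Oldroyd-type system, already exploited in the existence proof). The magnetic Maxwell-stress term $\dir(\nn^{\mathrm T}M^1\nn M^1) - \dir(\nn^{\mathrm T}M^2\nn M^2)$ and the transport terms in \eqref{prob4} produce trilinear expressions in $(w, N, \nn N)$ against the background field $\nn M^2 \in L^r(0,T;L^s)$; these are again of the same type and are absorbed into $\|\nn w\|_{L^2}^2$, $\|\nn N\|_{L^2}^2$ and $\|\Delta N\|_{L^2}^2$. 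The polynomial nonlinearity $\frac{1}{\mu^2}(|M|^2-1)M$ is locally Lipschitz, and since $M^1, M^2 \in L^\infty(0,T;W^{1,2}) \cap L^2(0,T;W^{2,2}) \hookrightarrow L^{10/3}(Q_T)$ (for $d=3$) and are in fact bounded in the norms that matter, its difference is controlled by $\bigl(1 + |M^1|^2 + |M^2|^2\bigr)|N|$, which after testing by $N-\Delta N$ yields terms absorbable into $\|N\|_{W^{1,2}}^2$ times an $L^1_t$ coefficient, using again Gagliardo-Nirenberg to trade $L^6$-norms of $M^i$ against $\|\Delta N\|_{L^2}$.

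I expect the main obstacle to be \emph{rigor of the test-function step rather than the algebra}: the difference $w$ lives only in the weak class, so one cannot literally substitute it into the weak formulation of the weak solution $(v^1,F^1,M^1)$; instead one must argue, as in the Navier-Stokes weak-strong theory, by using the strong solution $(v^2,F^2,M^2)$ (which does have enough regularity, including a time derivative good enough to integrate by parts) as the test function against $(v^1,F^1,M^1)$, combine this with \eqref{enineq} for $(v^1,F^1,M^1)$ and the energy equality for $(v^2,F^2,M^2)$, and handle the mixed term $\int_\om v^1(t)\cdot v^2(t)$ (and its $F$- and $M$-analogues) via a density/mollification-in-time argument so that $\frac{d}{dt}\int_\om v^1\cdot v^2$ is meaningful. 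A secondary technical point is verifying that every Prodi-Serrin exponent pairing $\frac2r+\frac3s=1$ with $s\in(3,\infty)$ indeed makes all the Young-inequality coefficients land in $L^1(0,T)$ simultaneously for the velocity, the deformation gradient, and the magnetization gradient — but since all three background fields are assumed to satisfy the \emph{same} Prodi-Serrin condition, this is uniform and poses no real difficulty. Once the differential inequality $y' \le \Phi y$ with $\Phi \in L^1$, $y(0)=0$ is in hand, Gronwall closes the argument and gives $(v^1,F^1,M^1)=(v^2,F^2,M^2)$ a.e.\ in $Q_T$.
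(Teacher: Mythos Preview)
Your proposal is correct and follows essentially the same route as the paper: combine the energy inequality \eqref{enineq} for the weak solution, the energy equality for the regular solution (your observation that Prodi--Serrin implies $L^4(Q_T)$ integrability, hence equality, is exactly Lemma~\ref{unreal}), and the cross-testing of each system by the other solution (the paper isolates this as Lemma~\ref{devin}), then estimate the resulting trilinear terms via the interpolation $\|u\|_{L^{2s/(s-2)}}\lesssim\|u\|_{L^2}^{1-3/s}\|\nabla u\|_{L^2}^{3/s}$ against the $L^r_tL^s_x$ background fields and close with Gronwall. Your identification of the structural cancellation between $\dir(FF^{\mathrm T})$ in the momentum equation and $\nabla v\,F$ in the $F$-equation, and of the analogous pairing between the Maxwell stress and the transport term in the $M$-equation, matches the paper's computations for $I_3+I_5$ and $I_2+I_8$; the only cosmetic discrepancy is that the paper arrives at an integral inequality $f(t)\le\int_0^t h\,f$ rather than a differential one, which is immaterial for Gronwall.
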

\begin{rem}
Like in the classical Navier-Stokes equations, the borderline case $(r,s)=(2,\infty)$ is admissible as well and it can be proved by means of our proof to Theorem~\ref{unik}, mutatis mutandis. We avoid inclusion of this case to the proof just for aesthetic reasons; one would have to constantly distinguish between the cases $s<\infty$ and $s=\infty$, when considering fractions involving $s$. 

We conjecture that Theorem~\ref{unik} holds as well for $(r,s)=(\infty,3)$ but considering a comparatively more involved proof of this situation in Navier-Stokes equations, we do not aspire to prove it here. See~\cite{ribaud} for a further extension of the Prodi-Serrin conditions.
\end{rem}

\section{Uniqueness in 2D} \label{sec:uni2d}

\begin{proof}[Proof of Theorem \ref{thm1}] Let us have weak solution $(v^1,F^1,M^1)$, $(v^2,F^2,M^2)$ equipped with the same initial conditions. Denoting the corresponding systems with corresponding superscripts, we test equation~\eqref{prob1}$^1 -$~\eqref{prob1}$^2$ with $v\ddf v_1 -v_2$ over $Q_t \ddf (0,t)\times \om$, $t\in(0,T)$. The pressure term disappears due to the divergence-free condition on $v$ and we obtain
\begin{align} \label{opeth}
\frac12 \| v(t) \|_2^2 + \nu \int_{Q_t} | \nn v |^2 + I_0 + I_1 + I_2 = 0,
\end{align}
where 
\begin{align*}
I_0 & \ddf \int_{Q_t} ((v^1 \cdot \nabla)v^1-(v^2 \cdot \nabla)v^2) \cdot v = \int_{Q_t} ((v \cdot \nabla)v^1) \cdot v \\
I_1 & \ddf \int_{Q_t} \dir (\nn^\text{T} \! M^1 \, \nn M^1  - \nn^\text{T} \! M^2 \, \nn M^2  ) \cdot v, \\
I_2 & \ddf - \int_{Q_t} \dir (F^1(F^1)^\text{T} - F^2(F^2)^\text{T})\cdot v.
\end{align*}
The terms $I_1$, $I_2$ pollute~\eqref{opeth} and we get rid of them using equations \eqref{prob3}$^i$ and \eqref{prob4}$^i$. Let us test equation~\eqref{prob3}$^1 -$ \eqref{prob3}$^2$ with $F \ddf F^1 - F^2$ over $Q_t$:
\begin{align}
\frac12 \| F(t) \|_2^2 + \kappa \int_{Q_t} | \nn F |^2 + I_3 + I_4 = 0,
\end{align}
where 
\begin{align*}
I_3 & \ddf \int_{Q_t} ((v^1 \cdot \nabla)F^1-(v^2 \cdot \nabla)F^2) :F =  \int_{Q_t} ((v \cdot \nabla)F^2):F^1= \int_{Q_t} ((v \cdot \nabla)F^1):F, \\
I_4 & \ddf - \int_{Q_t} (\nn v^1 F^1-\nn v^2 F^2) :F.
\end{align*}
Similarly~\eqref{prob4}$^1 -$ \eqref{prob4}$^2$ with $M \ddf (M^1 - M^2)$:
\begin{align} 
\frac12 \| M(t) \|_2^2 + \int_{Q_t} | \nn M |^2 + I_5 + I_6 = 0,
\end{align}
where 
\begin{align*}
I_5 & \ddf  \int_{Q_t} ((v^1 \cdot \nabla)M^1-(v^2 \cdot \nabla)M^2) \cdot M =  \int_{Q_t} ((v \cdot \nabla)M^2)\cdot M^1= \int_{Q_t} ((v \cdot \nabla)M^1)\cdot M, \\
I_6 & \ddf \frac{1}{\mu^2} \int_{Q_t} ((|M^1|^2 - 1)M^1-(|M^2|^2 - 1)M^2) \cdot M.
\end{align*}
Lastly, test again ~\eqref{prob4}$^1 -$ \eqref{prob4}$^2$ with $-\Delta M \ddf -\Delta (M^1 - M^2)$:
\begin{align} \label{opet}
\frac12 \| \nn M(t) \|_2^2 + \int_{Q_t} | \Delta M |^2 + I_7 + I_8 = 0,
\end{align}
where 
\begin{align*}
I_7 & \ddf  - \int_{Q_t} ((v^1 \cdot \nabla)M^1-(v^2 \cdot \nabla)M^2)\cdot \Delta M, \\
I_8 & \ddf - \frac{1}{\mu^2} \int_{Q_t} ((|M^1|^2 - 1)M^1-(|M^2|^2 - 1)M^2) \cdot \Delta M.
\end{align*}
Now we sum up \eqref{opeth}--\eqref{opet}:
\begin{multline} \label{babel} 
\underbrace{\frac12 \big( \| v(t) \|_2^2 +\| F(t) \|_2^2 +\| M(t) \|_2^2 +\| \nn M(t) \|_2^2 \big)}_{\dfe f(t)} \\[-15pt] + \overbrace{\int_{Q_t} \big( \nu | \nn v |^2 + \kappa | \nn F |^2+ | \nn M |^2 + | \Delta M |^2 \big)}^{\dfe g(t)} + \sum_{i=0}^8 I_i = 0,
\end{multline}
where the sum $\sum_{i=0}^8 I_i$ has to be taken care of. There is no problem with $I_0$, $I_3$ and $I_5$ as these terms are each quadratic in the solution difference and we can obtain $|I_{0,3,5}| \le \int_0^t h(s) f(s) \, ds + \e g(t)$ for some non-negative $h \in L^1(0,T)$ and $0 < \e \ll 1$: Since $d=2$, we may avail ourselves of the interpolation
\begin{align*} 
\| u \|_4^2 \le C \| u \|_2 \| \nn u \|_2 \quad \text{for any $u \in W_0^{1,2}(\om)$ with $C=C(\om)$,}
\end{align*}
and when combined with Young's inequality, we get
\begin{align} \nonumber
|I_0| & \le \int_{Q_t} |v|^2 |\nabla v^1| \le \int_0^t \| v(s) \|_4^2 \| \nn v^1(s) \|_2 \le C \int_0^t \| v(s) \|_2 \| \nn v(s) \|_2 \| \nn v^1(s) \|_2 \\  \label{nula}
&\le C \int_0^t \| v(s) \|_2^2 \| \nn v^1(s) \|^2_2 + \int_{Q_t} \frac{\nu}{8} | \nn v |^2, \\ \nonumber
|I_3| & \le \int_{Q_t} |v| |F| |\nabla F^1| \le \int_0^t \| v(s) \|_4 \| F(s) \|_4 \| \nn F^1(s) \|_2 \\ \nonumber
& \le C \int_0^t \| v(s) \|_2^{1/2} \| \nn v(s) \|_2^{1/2} \| F(s) \|_2^{1/2} \| \nn F(s) \|_2^{1/2} \| \nn F^1(s) \|_2 \\ \label{tri}
&\le C \int_0^t \big( \| v(s) \|_2^2 + \| F(s) \|_2^2 \big) \| \nn F^1(s) \|^2_2 + \int_{Q_t}\frac18 \big(  \nu | \nn v |^2 + \kappa | \nn F |^2 \big), \\ \nonumber
|I_5| & \le \int_{Q_t} |v| |M| |\nabla M^1| \le \int_0^t \| v(s) \|_4 \| M(s) \|_4 \| \nn M^1(s) \|_2 \\ \nonumber
& \le C \int_0^t \| v(s) \|_2^{1/2} \| \nn v(s) \|_2^{1/2} \| M(s) \|_2^{1/2} \| \nn M(s) \|_2^{1/2} \| \nn M^1(s) \|_2 \\ \label{pet}
&\le C \int_0^t \big( \| v(s) \|_2^2 + \| M(s) \|_2^2 \big) \| \nn M^1(s) \|^2_2 + \int_{Q_t} \frac18 \big(  \nu | \nn v |^2 + | \nn M |^2 \big). 
\end{align}
If we are able to achieve the same also with other $I_i$'s, we may in the end invoke Gronwall's lemma and hence procure the result. 

Let us now take $I_2+I_4$, i.e.\ the so-far uncontrolled terms related to the deformation gradients. Given that $$(F^i(F^i)^\text{T}):\nn v^j = (\nn v^j F^i):F^i, \quad i,j\in\{1,2\}, $$
we have
\begin{align*} 
I_2+I_4 &= - \int_{Q_t} \big( (F^1(F^1)^\text{T}):\nn v^2 + (F^2(F^2)^\text{T}):\nn v^1 - (\nn v^1 F^1):F^2  -(\nn v^2 F^2) :F^1 \big) \\ 
& = - \int_{Q_t} \big( (\nn v^2 F^1) :F^1 + (\nn v^1 F^2) :F^2 - (\nn v^1 F^1):F^2  -(\nn v^2 F^2) :F^1 \big) \\
& = \int_{Q_t} \big( (\nn v^1 F) :F^2- (\nn v^2 F) :F^1  \big) = \int_{Q_t} \big(  (\nn v F) :F^1 -(\nn v^1 F) :F \big),
\end{align*}
each of which is again quadratic in the solution difference and therefore manageable completely analogously as before, i.e.
\begin{align} \label{problm}
|I_2+I_4| & \le \int_{Q_t} |\nn v| |F| |F^1|  + \int_{Q_t} |F|^2 |\nn v^1|
\end{align}
and 
\begin{align} \nonumber
\int_{Q_t} |\nn v| |F| |F^1| & \le C\int_0^t \| \nn v(s) \|_2 \| F(s) \|_2^{1/2} \| \nn F(s) \|_2^{1/2} \| F^1(s) \|_2^{1/2} \| \nn F^1(s) \|_2^{1/2}  \\ \label{dva}
&\le   C \int_0^t \| F(s) \|_2^2 \| \nn F^1(s) \|_2^2 + \int_{Q_t} \frac18 \big(  \nu| \nn v |^2 + \kappa | \nn F |^2 \big), \\ \label{ctyri}
 \int_{Q_t} |F|^2 |\nn v^1| & \le C \int_0^t \| F(s) \|_2^2 \| \nn v^1(s) \|^2_2 + \int_{Q_t} \frac{\kappa}{8} | \nn F |^2. 
\end{align}
Next we handle $I_1+I_7$. First
$$\dir (\nn^\text{T} \! M^i \, \nn M^i  ) = \frac12 \nn |\nn M^i |^2 + \nn^\text{T} \! M^i \, \Delta M^i, \quad i=1,2,$$
so that
\begin{align*}
I_1 &= \int_{Q_t} (\nn^\text{T} \! M^1 \Delta M^1  - \nn^\text{T} \! M^2 \Delta M^2  ) \cdot v \\& = \int_{Q_t}  \big(((v^1 \cdot \nabla)M^1)\cdot \Delta M^1 + ((v^2 \cdot \nabla)M^2) \cdot \Delta M^2 -(\nn^\text{T} \! M^1 \Delta M^1) \cdot v^2  - (\nn^\text{T} \! M^2 \Delta M^2  ) \cdot v^1\big), \\
\intertext{hence}
I_1 + I_7 &= \int_{Q_t} \big( (\nn^\text{T} \! M^1 \Delta M^1  - \nn^\text{T} \! M^2 \Delta M^2  ) \cdot v - ((v^1 \cdot \nabla)M^1-(v^2 \cdot \nabla)M^2)\cdot \Delta M \big) \\
&= \int_{Q_t} \big( ((v^1 \cdot \nabla)M^1)\cdot \Delta M^2 +((v^2 \cdot \nabla)M^2)\cdot \Delta M^1  -(\nn^\text{T} \! M^1 \Delta M^1) \cdot v^2  - (\nn^\text{T} \! M^2 \Delta M^2  ) \cdot v^1\big) \\
&= \int_{Q_t} \big((\nn^\text{T} \! M^1 \Delta M^2) \cdot v^1 + (\nn^\text{T} \! M^2 \Delta M^1) \cdot v^2  -(\nn^\text{T} \! M^1 \Delta M^1) \cdot v^2  - (\nn^\text{T} \! M^2 \Delta M^2  ) \cdot v^1 \big) \\
&= \int_{Q_t} \big((\nn^\text{T} \! M \Delta M^2) \cdot v^1 - (\nn^\text{T} \! M \Delta M^1) \cdot v^2\big) = \int_{Q_t} \big((\nn^\text{T} \! M \Delta M^2) \cdot v - (\nn^\text{T} \! M \Delta M) \cdot v^2 \big).
\end{align*}
Both terms at the end are again quadratic in the solution difference and we may apply the same estimates as before: 
\begin{align} \label{shot}
|I_1+I_7| & \le \int_{Q_t} |\nn M| |v| |\Delta M^2|  + \int_{Q_t} |\nn M| |\Delta M| |v^2|.
\end{align}
Recalling the standard regularity result for the Laplace equation $$ \| \nn^2 u \|_2 \le C \big(\| u \|_2+\| \Delta u \|_2\big) \quad \text{for any $u \in W^{2,2}(\om)$ with $C=C(\om)$,} $$
the two terms in \eqref{shot} can be handled as follows:
\begin{align} 
\int_{Q_t} |\nn M| |v| |\Delta M^2| & \le   C \int_0^t \| \nn M (s) \|_2^{1/2} \| \nn^2 M (s) \|_2^{1/2} \| v(s) \|_2^{1/2} \| \nn v(s) \|_2^{1/2}\| \Delta M^2 (s) \|_2 \nonumber \\
& \le C \int_0^t \| \nn M (s) \|_2^{1/2} \big( \| M (s) \|_2^{1/2}+\| \Delta M (s) \|_2^{1/2} \big) \| v(s) \|_2^{1/2} \| \nn v(s) \|_2^{1/2}\| \Delta M^2 (s) \|_2 \nonumber \\
 & \le C \int_0^t \big( \| v (s) \|_2^2 + \| M (s) \|_2^2 + \| \nn M (s) \|_2^2  \big) \| \Delta M^2(s) \|_2^2 \nonumber \\
 & \qquad + \int_{Q_t}\frac18 \big( |\nn M|^2 + | \Delta M |^2 + \nu |\nn v|^2 \big), \label{jedna} \\ \nonumber
 \int_{Q_t} |\nn M| |\Delta M| |v^2| & \le C \int_0^t \| \nn M (s) \|_2^{1/2} \| \nn^2 M (s) \|_2^{1/2} \| v^2(s) \|_2^{1/2} \| \nn v^2(s) \|_2^{1/2}\| \Delta M (s) \|_2 \\ \nonumber
 & \le C \int_0^t \| \nn M (s) \|_2 \big( \| M (s) \|_2 + \| \Delta M (s) \|_2 \big) \| v^2(s) \|_2 \| \nn v^2(s) \|_2 + \int_{Q_t}\frac{1}{16} | \Delta M |^2 \\
 & \le C \int_0^t \big( \| M (s) \|_2^2 + \| \nn M (s) \|_2^2 \big) \| v^2(s) \|_2 \| \nn v^2(s) \|_2 + \int_{Q_t}\frac{1}{8} | \Delta M |^2 \label{sedm} . 
\end{align}
Thus what remains is to deal with $I_6$ and $I_8$, for which we first take into account
\begin{gather*}
(|M^1|^2 M^1-|M^2|^2 M^2) \cdot (M^1 - M^2) \ge 0, \\
\left| |M^1|^2 M^1-|M^2|^2 M^2 \right| \le \frac32 |M| (|M^1|^2 +|M^2|^2),
\end{gather*}
so that
\begin{align} \label{sest}
I_6 & = - \frac{1}{\mu^2} \int_{Q_t} | M |^2 + \frac{1}{\mu^2} \int_{Q_t} (|M^1|^2 M^1-|M^2|^2 M^2) \cdot M \ge - \frac{1}{\mu^2} \int_{Q_t} | M |^2, \\
I_8 & = - \frac{1}{\mu^2} \int_{Q_t} | \nn M |^2 - \frac{1}{\mu^2} \int_{Q_t} (|M^1|^2 M^1-|M^2|^2 M^2) \cdot \Delta M \nonumber \\
& \ge - \frac{1}{\mu^2} \int_{Q_t} | \nn M |^2 - \frac{3}{2 \mu^2} \int_{Q_t} (|M^1|^2 +|M^2|^2) |M| |\Delta M| \nonumber \\ \label{osm}
& \ge - C \int_0^T  \big( \| M (s) \|_2^2 + \| \nn M (s) \|_2^2 \big) \big( 1+ \| M^1 (s) \|_8^4 + \| M^2 (s) \|_8^4 \big) - \int_{Q_t}\frac{1}{8} | \Delta M |^2.
\end{align}
Because $M^{1,2} \in L^4(0,T;L^8(\om))$, we can apply estimates \eqref{nula}--\eqref{osm} to equation \eqref{babel}, to finally obtain
\begin{align*}
f(t) \le \int_0^t h(s)f(s)\, ds
\end{align*}
for some non-negative, integrable function $h$. Gronwall's inequality now yields the claim.
\end{proof}

\section{Uniqueness in 3D} \label{sec:uni3d}

In this section we prove Theorem \ref{unik}. We begin with two auxiliary lemmas.
\begin{lem} \label{unreal}
Let $(v,F,M)$ be a weak solution in the sense of Definition \ref{defwsol}, enjoying additionally\footnote{For a possible way how to weaken these assumptions, see \cite{MR2665030}.} 
$$v \in L^4(0,T;L^4(\om;\R^{d})), \quad F \in L^4(0,T;L^4(\om;\R^{d\times d})), \quad \nn M \in L^4(0,T; L^4(\om;\R^{3\times d})). $$
Then $$v_t \in L^2(0,T;(W^{1,2}_{0,\dir}(\om))^*), \quad F_t \in (L^2(0,T;(W^{1,2}_0(\om)) \cap L^4(Q_T))^*, \quad M_t \in L^2(0,T; L^2(\om)).$$
In particular, $(v,F,M)$ satisfies \eqref{enineq} even as equality. 
\end{lem}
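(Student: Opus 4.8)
The plan is to verify each claimed time-derivative regularity by reading off from equations \eqref{prob1}, \eqref{prob3}, \eqref{prob4} which terms appear on the right-hand side once $v_t$, $F_t$, $M_t$ are isolated, and then checking that under the extra integrability hypotheses every such term lies in the asserted dual space. Concretely, for $M_t$ I would write $M_t = \Delta M - (v\cdot\nn)M - \tfrac{1}{\mu^2}(|M|^2-1)M$ and estimate in $L^2(\om)$ pointwise in time: $\Delta M \in L^2(Q_T)$ directly from \eqref{demo}; the convective term satisfies $\|(v\cdot\nn)M\|_2 \le \|v\|_4\|\nn M\|_4 \in L^2(0,T)$ by H\"older in time (both factors in $L^4(0,T;L^4)$); and the cubic term is controlled since $M \in L^\infty(0,T;W^{1,2})\hookrightarrow L^\infty(0,T;L^6)$ in $d=3$, so $|M|^3 \in L^\infty(0,T;L^2)$ and also $|M| \in L^2(Q_T)$. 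Hence $M_t \in L^2(0,T;L^2(\om))$.

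Next, for $v_t$ I would test \eqref{prob1} against $\pphi \in W^{1,2}_{0,\dir}(\om)$ and bound the resulting duality pairing. The viscous term $\nu\Delta v$ pairs as $-\nu\int \nn v : \nn\pphi$, giving an $L^2(0,T)$ bound on the operator norm since $\nn v \in L^2(Q_T)$. The convective term pairs as $-\int (v\otimes v):\nn\pphi$, bounded by $\|v\|_4^2\|\nn\pphi\|_2$ with $\|v\|_4^2 \in L^2(0,T)$ by hypothesis. For the Maxwell-type term $\dir(\nn^\text{T}M\,\nn M)$, integration by parts against $\pphi$ yields $\int \nn^\text{T}M\,\nn M : \nn\pphi$, bounded by $\|\nn M\|_4^2\|\nn\pphi\|_2 \in L^2(0,T)$; likewise $\dir(FF^\text{T})$ pairs as $\int FF^\text{T}:\nn\pphi \le \|F\|_4^2\|\nn\pphi\|_2$ with $\|F\|_4^2 \in L^2(0,T)$. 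Summing, $v_t \in L^2(0,T;(W^{1,2}_{0,\dir}(\om))^*)$. The argument for $F_t$ is the same, with \eqref{prob3} giving $F_t = \kappa\Delta F - (v\cdot\nn)F + \nn v\,F$; the term $\nn v\,F$ pairs against a test function $\Psi$ as $\int \nn v\,F : \Psi$ and is bounded by $\|\nn v\|_2\|F\|_4\|\Psi\|_4$, which is exactly why the dual space for $F_t$ must be taken with respect to $W^{1,2}_0(\om)\cap L^4(Q_T)$ rather than just $W^{1,2}_0$; the convective term $(v\cdot\nn)F$ pairs as $-\int (v\otimes F):\nn\Psi$ with bound $\|v\|_4\|F\|_4\|\nn\Psi\|_2 \in L^2(0,T)$.

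Finally, with $v_t, F_t, M_t$ in the above spaces, the functions $v$, $F$ and $M - \Delta M$ are admissible test functions in the weak formulations of \eqref{prob1}, \eqref{prob3}, \eqref{prob4} respectively — each belongs to the relevant space in the dual pair at a.e.\ time and lies in the corresponding $L^2$-in-time or $L^4$-in-time space needed to make all nonlinear integrals finite (here one again uses $v, F \in L^4(Q_T)$, $\nn M \in L^4(Q_T)$, and $\Delta M \in L^2(Q_T)$). Testing and integrating over $(0,t)$, the convective terms $\int_{Q_t}(v\cdot\nn)v\cdot v$, $\int_{Q_t}(v\cdot\nn)F:F$, $\int_{Q_t}(v\cdot\nn)M\cdot(M-\Delta M)$ are justified by these integrabilities, and a standard Lions--Magenes type lemma converts the duality pairings $\langle v_t,v\rangle$ etc.\ into $\tfrac12\tfrac{d}{dt}\|v\|_2^2$ etc., yielding \eqref{enineq} with equality. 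The main obstacle is bookkeeping rather than conceptual: one must be careful that the time-integrability exponents multiply correctly (two $L^4(0,T;L^4)$ factors produce an $L^2(0,T)$ quantity, which pairs with the $L^2(0,T)$ regularity of $\nn v$, $\nn F$, $\Delta M$), and in particular that the $\nn v\,F$ term in the $F$-equation genuinely forces the enlarged test space, so that the statement of the lemma is sharp.
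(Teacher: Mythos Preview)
Your proposal is correct and follows essentially the same approach as the paper: isolate each time derivative via the governing equations, bound the nonlinear terms using the extra $L^4(Q_T)$ integrability together with H\"older's inequality, and then invoke the resulting duality to justify testing by $v$, $F$, $M-\Delta M$ so that the energy identity holds with equality. The only cosmetic difference is that the paper estimates the convective term $(v\cdot\nn)F$ directly as $\|v\|_{L^4(\om)}\|\nn F\|_{L^2(\om)}\|\varphi\|_{L^4(\om)}$ (thereby using the $L^4$-part of the test space for \emph{both} nonlinear terms), whereas you integrate it by parts and place it against $\nn\Psi$; both routes are valid and lead to the same conclusion.
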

\begin{proof}
The energy equality results from the fact that we are allowed to test~\eqref{prob1} by $v$, \eqref{prob3} by $F$ and \eqref{prob4} by $M-\Delta M$. The sum of the three integral identities then yields the equality.

Now for the time derivatives. For $\varphi \in L^2(0,T;W^{1,2}_{0,\dir}(\om))$ we estimate the time derivative $v_t$ by means of~\eqref{prob1} as follows:
\begin{align*}
\Bigl| \int_0^T \langle &v_t,  \varphi \rangle \Bigr| \\ 
& = \Bigl| \int_0^T \nu \ii \nn v : \nn \varphi \, dx - \ii (v \otimes v) : \nn \varphi \, dx - \ii (\nn^\text{T} \! M \, \nn M ): \nn \varphi \, dx + \ii (FF^\text{T}) : \nn \varphi \, dx  \, dt \Bigr|  \\ 
& \le \int_0^T \Bigl( \nu \| \nn v \|_{L^2(\om)} + \| v \|_{L^4(\om)}^2 + \| \nn M \|_{L^4(\om)}^2 + \| F \|_{L^4(\om)}^2 \Bigr) \| \nn \varphi \|_{L^2(\om)} \, dt \\
& \le C \| \varphi \|_{L^2(0,T;W^{1,2}_{0,\dir}(\om))}. \phantom{\int_0^T}
\end{align*}
Similarly for $F_t$: Let $\varphi \in L^2(0,T;W^{1,2}_0(\om)) \cap L^4(Q_T)$. Then
\begin{align*}
\Bigl| \int_0^T \langle F_t, \varphi \rangle \Bigr| & = \Bigl| \int_0^T \kappa \ii \nn F : \nn \varphi \, dx + \ii (v \cdot \nabla)F : \varphi \, dx - \ii \nn v F : \varphi \, dx  \, dt \Bigr|  \\ 
& \le \int_0^T \kappa \| \nn F \|_{L^2(\om)} \| \nn \varphi \|_{L^2(\om)}  + \bigl( \| v \|_{L^4(\om)}\| \nn F \|_{L^2(\om)} + \| \nn v \|_{L^2(\om)}\| F \|_{L^4(\om)}\bigr) \| \varphi \|_{L^4(\om)} \, dt \\
& \le C \bigl(\| \varphi \|_{L^2(0,T;W^{1,2}_{0,\dir}(\om))}+ \| \varphi \|_{L^4(Q_T)} \bigr). \phantom{\int_0^T}
\end{align*}
Finally, since $M\in L^8(Q_T)$, the time derivative $M_t$ is estimated from~\eqref{prob4} simply as
\begin{align*}
\| M_t \|_{L^2(Q_T)} &\le \| (v \cdot \nabla)M \|_{L^2(Q_T)} + \| \Delta M \|_{L^2(Q_T)} + \frac{1}{\mu^2} \|(|M|^2 - 1)M \|_{L^2(Q_T)} \\
& \le \| v \|_{L^4(Q_T)} \| \nn M \|_{L^4(Q_T)} + \| \Delta M \|_{L^2(Q_T)} +  \frac{1}{\mu^2} \| M \|_{L^6(Q_T)}^3 + \frac{1}{\mu^2} \| M \|_{L^2(Q_T)}  \le C. \qedhere
\end{align*}
\end{proof}

Denoting the corresponding systems~\eqref{prob1}--\eqref{prob4} for $(v^i,F^i,M^i)$ as~\eqref{prob1}$^i$--\eqref{prob4}$^i$, $i=1,2$, another auxiliary lemma for the proof of Theorem~\ref{unik} is the following:
\begin{lem} \label{devin}
Under assumptions of Theorem \ref{unik}, we are allowed to test~\eqref{prob1}$^1$ with $v^2$, \eqref{prob3}$^1$ with $F^2$ and~\eqref{prob4}$^1$ with $M^2-\Delta M^2$.
\end{lem}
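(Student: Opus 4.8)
The plan is to show that each of the three weak formulations for $(v^1,F^1,M^1)$ --- originally valid only for test functions in the natural energy spaces --- extends by density and duality to the pairings against $v^2$, $F^2$ and $M^2-\Delta M^2$ respectively, using the extra Prodi-Serrin integrability of the \emph{second} solution. The key observation is that although $v^1$, $F^1$, $\nn M^1$ need not lie in $L^4(Q_T)$ (see Lemma~\ref{unreal} and the discussion after Definition~\ref{defwsol}), every nonlinear term in~\eqref{prob1}$^1$--\eqref{prob4}$^1$ that obstructs testing is a product in which at most one factor comes from the weak solution $(v^1,F^1,M^1)$ and the remaining factor(s) may be taken from $(v^2,F^2,M^2)$, which \emph{does} have the required higher integrability. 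So the strategy is: write down, for each equation, the duality estimate on $\int_0^T\langle\partial_t(\cdot)^1,\varphi\rangle$ with $\varphi$ now a fixed function built from the second solution, and bound every term by H\"older using only (a) the energy-space bounds~\eqref{demo} on $(v^1,F^1,M^1)$, and (b) the $L^r(0,T;L^s(\om))$ bounds on $|v^2|,|F^2|,|\nn M^2|$ together with the parabolic regularity $M^2\in L^2(0,T;W^{2,2})$, $M^2\in L^\infty(0,T;W^{1,2})$ and the embeddings these entail.

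Concretely, I would proceed equation by equation. For~\eqref{prob1}$^1$ tested by $v^2$: the viscous term is $\int\nn v^1:\nn v^2$, fine by~\eqref{demo}; the convective term is $\int (v^1\otimes v^1):\nn v^2$, which we rewrite (using $\dir v^1=0$ and $v^1=0$ on $\partial\om$) as $-\int(v^1\cdot\nn)v^2\cdot v^1=\int (v^1\otimes v^2):\nn v^1$, and estimate $|v^1||v^2||\nn v^1|$ by H\"older with exponents tied to $2/r+3/s=1$: $v^2\in L^r_tL^s_x$, $\nn v^1\in L^2_tL^2_x$, and $v^1\in L^{2r/(r-2)}_t L^{2s/(s-2)}_x$, the last being exactly an interpolation space between $L^\infty_tL^2_x$ and $L^2_tL^6_x$ available since $d=3$ (this is the classical Prodi-Serrin bookkeeping). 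The magnetoelastic terms $\int(\nn^\text{T}M^1\,\nn M^1):\nn v^2$ and $\int(F^1(F^1)^\text{T}):\nn v^2$ are handled the same way, pairing $\nn v^2$ (or after reorganizing, $\nn M^2$, $F^2$) in $L^r_tL^s_x$ against the energy-bounded factors. For~\eqref{prob3}$^1$ tested by $F^2$, and~\eqref{prob4}$^1$ tested by $M^2-\Delta M^2$, I would carry out the analogous term-by-term H\"older estimates; the cubic term $(|M^1|^2-1)M^1\cdot(M^2-\Delta M^2)$ is controlled by $M^1\in L^\infty_tL^2_x\cap L^2_tL^6_x\hookrightarrow L^{10/3}(Q_T)$ (so $|M^1|^3\in L^{10/9}$, but in fact $L^6_x$-in-time-$L^2$ interpolation gives enough), paired against $\Delta M^2\in L^2(Q_T)$. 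Once these bounds are in place, a standard density argument --- approximating $v^2$, $F^2$, $M^2-\Delta M^2$ by smooth (solenoidal, where needed) functions and passing to the limit, all bounds being uniform --- legitimizes the substitution.

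I expect the main obstacle to be not any single estimate but the careful verification that \emph{every} product arising from each of the three equations, after integration by parts to move derivatives onto the low-regularity factor, splits as ``one energy-space factor $\times$ Prodi-Serrin factor(s)'' with H\"older exponents that close exactly under $2/r+3/s=1$ --- in particular the term $\int(\nn^\text{T}M^1\,\Delta M^1)\cdot v^2$ hidden inside testing~\eqref{prob1}$^1$, which involves $\Delta M^1\in L^2_tL^2_x$ and $\nn M^1$ interpolated, against $v^2\in L^r_tL^s_x$, and whose treatment parallels the identity $\dir(\nn^\text{T}M^i\,\nn M^i)=\tfrac12\nn|\nn M^i|^2+\nn^\text{T}M^i\,\Delta M^i$ used in the 2D proof. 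A secondary technical point is the density step for $M^2-\Delta M^2$: since $M^2\in L^2(0,T;W^{2,2}_n(\om))$ only, one tests first with $-\Delta M^2$ via the identity $\int\nn(\partial_t M^1):\nn M^2=-\int\partial_t M^1\cdot\Delta M^2$ (valid because $\partial M^2/\partial n=0$), so that the pairing is really $\langle\partial_t M^1,M^2\rangle$ plus $\int\nn\partial_t M^1:\nn M^2$, both meaningful once $M^1_t\in L^r_tL^2_x$ and $M^1\in L^2_tW^{2,2}_x$; I would state this reduction explicitly before estimating. Granting Lemma~\ref{unreal} for the comparison with $(v^1,F^1,M^1)$'s own equation tested against itself is not needed here --- only the weak-solution regularity~\eqref{demo} of the first solution and the Prodi-Serrin hypothesis on the second enter.
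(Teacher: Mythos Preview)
Your approach is essentially the paper's: there the argument is framed as showing $v_t^1\in\bigl(L^2(0,T;W^{1,2}_{0,\dir})\cap L^r(0,T;L^s)\bigr)^*$, $F_t^1\in\bigl(L^2(0,T;W^{1,2}_0)\cap L^r(0,T;L^s)\bigr)^*$ and $\nabla M_t^1$ in the analogous dual, via exactly the interpolation-and-H\"older estimates you describe (interpolate the weak-solution factor into $L^{2s/(s-2)}_x$ via $\|u\|_{2s/(s-2)}\le C\|u\|_2^{1-3/s}\|\nabla u\|_2^{3/s}$, pair against $\|\varphi\|_{L^s}$); your term-by-term verification with the concrete test functions $v^2$, $F^2$, $M^2-\Delta M^2$ is the same computation with $\varphi$ specialized.

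One point needs correcting. For the terms $\int(\nabla^\text{T} M^1\,\nabla M^1):\nabla v^2$ and $\int(F^1(F^1)^\text{T}):\nabla v^2$ you propose ``pairing $\nabla v^2$ \dots\ in $L^r_tL^s_x$'', but no Prodi--Serrin bound is assumed on $\nabla v^2$---only on $v^2$ itself. As you correctly observe at the end for the $M$-term, one must \emph{not} integrate by parts onto $v^2$ here: keep these as $\int\dir(\,\cdot\,)\cdot v^2$ and bound by $\|\nabla M^1\|_{L^{2s/(s-2)}}\|\nabla^2 M^1\|_{L^2}\|v^2\|_{L^s}$ and $\|F^1\|_{L^{2s/(s-2)}}\|\nabla F^1\|_{L^2}\|v^2\|_{L^s}$ respectively, which is precisely what the paper does. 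Also, your cubic-term remark undersells the regularity of $M^1$: from $M^1\in L^\infty(0,T;W^{1,2})\cap L^2(0,T;W^{2,2})$ in $d=3$ one has $M^1\in L^8(Q_T)$ (not merely $L^{10/3}$), so $|M^1|^3\in L^{8/3}(Q_T)\subset L^2(Q_T)$ pairs directly with $\Delta M^2\in L^2(Q_T)$; the $L^{10/3}$ bound you quote would not suffice.
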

\begin{proof}
We proceed like in Lemma \ref{unreal}. Let us show that $v_t^1 \in (L^2(0,T;W^{1,2}_{0,\dir}(\om)) \cap L^r(0,T;L^{s}(\om)))^*$ first:
\begin{multline*}
\Bigl| \int_0^T \langle v_t^1, \varphi \rangle \Bigr| = \Bigl| \int_0^T \nu \ii \nn v^1 : \nn \varphi \, dx \\ - \ii (v^1 \cdot \nabla) v^1 \cdot \varphi \, dx + \ii \dir (\nn^\text{T} \! M^1 \, \nn M^1 ) \cdot \varphi \, dx - \ii \dir (F^1(F^1)^\text{T}) \cdot \varphi \, dx  \, dt \Bigr| 
\end{multline*}
It evidently suffices to estimate the terms on the second line.
\begin{align*}
\Bigl| \int_0^T \ii (v^1 \cdot \nabla) v^1 \cdot \varphi \, dx \, dt \Bigr| & \le  \int_0^T \|v^1\|_{L^{\frac{2s}{s-2}}(\om)} \|\nn v^1\|_{L^2(\om)} \|\varphi\|_{L^{s}(\om)} \, dx \, dt \\
& \le  C \int_0^T \|v^1\|_{L^2(\om)}^{1- \frac{3}{s}} \|\nn v^1\|_{L^2(\om)}^{1+\frac{3}{s}} \|\varphi\|_{L^{s}(\om)} \, dx \, dt
\\
& \le  C \, \|v^1\|_{L^{\infty}(0,T;L^2(\om))}^{1- \frac{3}{s}} \|\nn v^1\|_{L^2(Q_T)}^{1+\frac{3}{s}} \|\varphi\|_{L^{\frac{2s}{s-3}}(0,T;L^{s}(\om))} \\
& \le C \, \|\varphi\|_{L^{r}(0,T;L^{s}(\om))}, \displaybreak[0] \\ 
\Bigl| \int_0^T \ii \dir (\nn^\text{T} \! M^1 \, \nn M^1 ) \cdot \varphi \, dx \, dt \Bigr| & \le  2 \int_0^T \|\nn M^1\|_{L^{\frac{2s}{s-2}}(\om)} \|\nn^2 M^1\|_{L^2(\om)} \|\varphi\|_{L^{s}(\om)} \, dx \, dt \\
& \le  C \int_0^T \|\nn M^1\|_{L^2(\om)}^{1- \frac{3}{s}} \|\nn^2 M^1\|_{L^2(\om)}^{1+\frac{3}{s}} \|\varphi\|_{L^{s}(\om)} \, dx \, dt
\\
& \le  C \, \|\nn M^1\|_{L^{\infty}(0,T;L^2(\om))}^{1- \frac{3}{s}} \|\nn^2 M^1\|_{L^2(Q_T)}^{1+\frac{3}{s}} \|\varphi\|_{L^{\frac{2s}{s-3}}(0,T;L^{s}(\om))} \\
& \le C \, \|\varphi\|_{L^{r}(0,T;L^{s}(\om))}, \displaybreak[0] \\
\Bigl| \int_0^T \ii \dir (F^1(F^1)^\text{T}) \cdot \varphi \, dx \, dt \Bigr| & \le  2 \int_0^T \|F^1\|_{L^{\frac{2s}{s-2}}(\om)} \|\nn F^1\|_{L^2(\om)} \|\varphi\|_{L^{s}(\om)} \, dx \, dt \\
& \le  C \int_0^T \| F^1\|_{L^2(\om)}^{1- \frac{3}{s}} \|\nn F^1\|_{L^2(\om)}^{1+\frac{3}{s}} \|\varphi\|_{L^{s}(\om)} \, dx \, dt
\\
& \le  C \, \|F^1\|_{L^{\infty}(0,T;L^2(\om))}^{1- \frac{3}{s}} \|\nn F^1\|_{L^2(Q_T)}^{1+\frac{3}{s}} \|\varphi\|_{L^{\frac{2s}{s-3}}(0,T;L^{s}(\om))} \\
& \le C \, \|\varphi\|_{L^{r}(0,T;L^{s}(\om))}.
\end{align*}
Similarly $F_t^1 \in (L^2(0,T;W^{1,2}_0(\om)) \cap L^{r}(0,T;L^{s}(\om)))^*$:
\begin{align*}
\Bigl| \int_0^T \langle F^1_t, \varphi \rangle \Bigr| & = \Bigl| \int_0^T \kappa \ii \nn F^1 : \nn \varphi \, dx + \ii (v^1 \cdot \nabla)F^1 : \varphi \, dx - \ii \nn v^1 F^1 : \varphi \, dx  \, dt \Bigr|. 
\end{align*}
It is again sufficient to estimate the second term only (the third one follows analogously given that $v^1$ and $F^1$ have the same regularity): 
\begin{align} \label{mngld}
\left.
\begin{aligned}
\Bigl| \int_0^T \ii (v^1 \cdot \nabla)F^1 : \varphi \, dx \, dt \Bigr| & \le  \int_0^T \|v^1\|_{L^{\frac{2s}{s-2}}(\om)} \|\nn F^1\|_{L^2(\om)} \|\varphi\|_{L^{s}(\om)} \, dx \, dt \\ 
& \le  C \int_0^T \|v^1\|_{L^2(\om)}^{1- \frac{3}{s}} \|\nn v^1\|_{L^2(\om)}^{\frac{3}{s}} \|\nn F^1\|_{L^2(\om)}\|\varphi\|_{L^{s}(\om)} \, dx \, dt
\\ 
& \le  C \, \|v^1\|_{L^{\infty}(0,T;L^2(\om))}^{1- \frac{3}{s}} \|\nn v^1\|_{L^2(Q_T)}^{\frac{3}{s}} \|\nn F^1\|_{L^2(Q_T)} \|\varphi\|_{L^{\frac{2s}{s-3}}(0,T;L^{s}(\om))} \\ 
& \le C \, \|\varphi\|_{L^r(0,T;L^{s}(\om))}. 
\end{aligned}
\right.
\end{align}
And to conclude, $\nn M_t^1 \in \bigl(\{\varphi \in L^2(0,T;W^{1,2}(\om) \mid \varphi\cdot \n = 0 \text{\ on\ }(0,T)\times \partial \om \} \cap L^r(0,T;L^s(\om))\bigr)^*$:
\begin{align*}
\Bigl| \int_0^T \langle \nn M^1_t, \varphi \rangle \Bigr| &= \Bigl| \int_0^T \langle M^1_t, \dir \varphi \rangle \Bigr| = \Bigl| \int_0^T \ii \Delta M^1 \cdot \dir \varphi \, dx + \ii (v^1 \cdot \nabla)M^1 \cdot \dir \varphi \, dx \\ & \qquad + \frac{1}{\mu^2} \ii (|M^1|^2-1) M^1 \cdot \dir \varphi \, dx  \, dt \Bigr|. 
\end{align*}
The first and the last summand on the left are estimated trivially (recall $M^1 \in L^8(Q_T)$). As for the convective term, we estimate 
\begin{align*}
\Bigl| \int_0^T \ii (v^1 \cdot \nabla)M^1 \cdot \dir \varphi \, dx \, dt \Bigr| &\le \int_0^T \ii |\nn v^1| |\nabla M^1| |\varphi| + |v^1| |\nabla^2 M^1| |\varphi| \, dx \, dt 
\end{align*}
Both terms can be handled exactly like \eqref{mngld} and it follows that~\eqref{prob4}$^1$ can be tested with $\Delta M^2$ and hence also with $M^2-\Delta M^2$.
\end{proof}
Now we can prove the result on uniqueness in 3D:

\begin{proof}[Proof of Theorem \ref{unik}] Let us call the shared initial data $(v_0,F_0,M_0)$ and denote $v\ddf v_1 -v_2$, $F\ddf F_1 -F_2$ and $M\ddf M_1 -M_2$. Since, by simple interpolation,  $(v^2,F^2,M^2)$ satisfy the assumptions of Lemma~\ref{unreal}, in conjunction with Lemma~\ref{devin}, we have the following tools in our disposal:
\begin{enumerate}
    \item the energy inequality~\eqref{enineq} for  $(v^1,F^1,M^1)$,
    \item the energy equality (i.e.~\eqref{enineq} turned an equality) for $(v^2,F^2,M^2)$, 
    \item testability of~\eqref{prob1}$^1$, \eqref{prob3}$^1$ and \eqref{prob4}$^1$ by $v^2$, $F^2$ and $(M^2-\Delta M^2)$, respectively,
    \item testability of~\eqref{prob1}$^2$, \eqref{prob3}$^2$ and \eqref{prob4}$^2$ by $v^1$, $F^1$ and $(M^1-\Delta M^1)$, respectively.
\end{enumerate}
Schematically, let us perform $\text{1.}+ \text{2.}-(2\cdot \text{3.} + 2 \cdot  \text{4.})$. Since
\begin{align*}
\| v(t) \|_{L^2(\om)}^2 &= \| v^1(t) \|_{L^2(\om)}^2 + \| v^2(t) \|_{L^2(\om)}^2 - 2 \| v_0 \|_{L^2(\om)}^2 - 2 \int_0^t \Bigl(\langle v^1_t, v^2 \rangle + \langle v^2_t, v^1 \rangle \Bigr),  \\ 
 \| F(t) \|_{L^2(\om)}^2 &= \| F^1(t) \|_{L^2(\om)}^2 + \| F^2(t) \|_{L^2(\om)}^2 - 2 \| F_0 \|_{L^2(\om)}^2 - 2 \int_0^t \Bigl( \langle F^1_t, F^2 \rangle + \langle F^2_t, v^1 \rangle \Bigr),  \\
\| M(t) \|_{L^2(\om)}^2 &=\| M^1(t) \|_{L^2(\om)}^2 + \| M^2(t) \|_{L^2(\om)}^2 - 2 \| M_0 \|_{L^2(\om)}^2 - 2 \int_{Q_t} \Bigl( M^1_t \cdot M^2   + M^2_t \cdot M^1 \Bigr),  \\
\| \nn M(t) \|_{L^2(\om)}^2 &=\| \nn M^1(t) \|_{L^2(\om)}^2 + \| \nn M^2(t) \|_{L^2(\om)}^2 - 2 \| \nn M_0 \|_{L^2(\om)}^2 \\ & \quad - 2 \int_0^t \Bigl(\langle M^1_t, -\Delta M^2 \rangle + \langle M^2_t, -\Delta M^1 \rangle \Bigr), 
\end{align*}
we obtain a relation of the form 
\begin{multline} \label{blabel} 
\underbrace{\frac12 \big( \| v(t) \|_2^2 +\| F(t) \|_2^2 +\| M(t) \|_2^2 +\| \nn M(t) \|_2^2 \big)}_{\dfe f(t)} \\[-15pt] + \int_{Q_t} \big( \nu | \nn v |^2 + \kappa | \nn F |^2+ | \nn M |^2 + | \Delta M |^2 \big) \le  \sum_{i=1}^9 I_i,
\end{multline}
where
\begin{align*}
I_1 & \ddf \int_{Q_t} (v^1 \cdot \nabla)v^1 \cdot v^2 + \int_{Q_t} (v^2 \cdot \nabla)v^2 \cdot v^1,  \\
I_2 & \ddf  \int_{Q_t} \dir (\nn^\text{T} \! M^1 \, \nn M^1) \cdot v^2  + \int_{Q_t} \dir (\nn^\text{T} \! M^2 \, \nn M^2  ) \cdot v^1 , \\
I_3 & \ddf - \int_{Q_t} \dir (F^1(F^1)^\text{T}) \cdot v^2 - \int_{Q_t} \dir (F^2(F^2)^\text{T})\cdot v^1 \\
I_4 & \ddf  \int_{Q_t} (v^1 \cdot \nabla)F^1 : F^2 + \int_{Q_t} (v^2 \cdot \nabla)F^2 : F^1, \\
I_5 & \ddf -\int_{Q_t} (\nn v^1 F^1):F^2 - \int_{Q_t} (\nn v^2 F^2) :F^1, \\
I_6 & \ddf  \int_{Q_t} (v^1 \cdot \nabla)M^1 \cdot M^2 + \int_{Q_t} (v^2 \cdot \nabla)M^2 \cdot M^1, \\
I_7 & \ddf  - \frac{1}{\mu^2} \int_{Q_t} ((|M^1|^2 - 1)M^1 -(|M^2|^2 - 1)M^2) \cdot M, \\
I_8 & \ddf  - \int_{Q_t} (v^1 \cdot \nabla)M^1 \cdot \Delta M^2 - \int_{Q_t} (v^2 \cdot \nabla)M^2 \cdot \Delta M^1, \\
I_9 & \ddf \frac{1}{\mu^2} \int_{Q_t} ((|M^1|^2 - 1)M^1 -(|M^2|^2 - 1)M^2) \cdot \Delta M.
\end{align*}
Like in $d=2$, we want to estimate the sum $\sum_{i=1}^9 I_i$ so that we end up with $f(t) \le \int_0^t h(s) f(s) \, ds$ for some non-negative $h \in L^1(0,T)$, whence Gronwall's lemma yields the result. Let us start with $I_1$, which can be rewritten, due to orthogonality of the convective term, as
\begin{align*}
I_1 = \int_{Q_t} ((v^2 \cdot \nabla)v^2-(v^1 \cdot \nabla)v^1) \cdot v = - \int_{Q_t} (v \cdot \nabla)v^2 \cdot v = \int_{Q_t} (v \cdot \nabla)v \cdot v^2 .
\end{align*}
Prodi-Serrin conditions on $v^2$ now allow us to estimate\footnote{Note that $r=2s/(s-3)$.}
\begin{align} \nonumber
|I_1| & \le \int_{Q_t} |v| |\nn v| |v^2| \le \int_0^t \| v \|_{L^{\frac{2s}{s-2}}(\om)} \| \nn v \|_{L^2(\om)} \| v^2 \|_{L^s(\om)} 
 \le C \int_0^t \| v \|_{L^2(\om)}^{1-\frac{3}{s}} \| \nn v \|_{L^2(\om)}^{1+ \frac{3}{s}}  \| v^2 \|_{L^s(\om)} \\ &\le C \int_0^t \| v \|_{L^2(\om)}^2 \| v^2 \|^r_{L^s(\om)} +  \int_{Q_t}  \frac{\nu}{8} | \nn v |^2. \label{diojedna}
\end{align}
Terms $I_4$ and $I_6$, that can be rewritten as
\begin{align*}
I_4 &= \int_{Q_t} ((v^2 \cdot \nabla)F^2-(v^1 \cdot \nabla)F^1) : F = - \int_{Q_t} (v \cdot \nabla)F^2 : F = \int_{Q_t} (v \cdot \nabla)F : F^2, \\
I_6 &= \int_{Q_t} (v \cdot \nabla)M \cdot M^2.
\end{align*}
are treated analogously to $I_1$ (though not optimal, $I_6$ can be estimated by the same means as $I_4$):
\begin{align}  \nonumber
|I_4| & \le \int_{Q_t} |v| |\nn F| |F^2| \le \int_0^t \| v \|_{L^{\frac{2s}{s-2}}(\om)} \| \nn F \|_{L^2(\om)} \| F^2 \|_{L^s(\om)} \\
& \le C \int_0^t \| v \|_{L^2(\om)}^{1-\frac{3}{s}} \| \nn v \|_{L^2(\om)}^{\frac{3}{s}} \| \nn F \|_{L^2(\om)}  \| F^2 \|_{L^s(\om)} \nonumber \\
& \le C \int_0^t \| v \|_{L^2(\om)}^2 \| F^2 \|^r_{L^s(\om)} + \int_{Q_t}  \frac18 \big(  \nu | \nn v |^2 + \kappa | \nn F |^2 \big), \label{dioctyri} \\ \label{diosest}
|I_6| & \le C \int_0^t \| v \|_{L^2(\om)}^2 \| M^2 \|^r_{L^s(\om)} + \int_{Q_t}  \frac18 \big(  \nu | \nn v |^2 + | \nn M |^2 \big). 
\end{align}
Next we investigate the remaining terms related to the deformation gradients, i.e.\  $I_3+I_5$. Since $(F^i(F^j)^\text{T}):\nn v^k = (\nn v^k F^j):F^i,$ for $i,j,k=1,2$, we rewrite the sum as
\begin{align*} 
I_3+I_5 &= \int_{Q_t} \big( (F^1(F^1)^\text{T}):\nn v^2 + (F^2(F^2)^\text{T}):\nn v^1 - (\nn v^1 F^1):F^2  -(\nn v^2 F^2) :F^1 \big) \\ 
& =  \int_{Q_t} \big( (\nn v^2 F^1) :F^1 + (\nn v^1 F^2) :F^2 - (\nn v^1 F^1):F^2  -(\nn v^2 F^2) :F^1 \big) \\
& = \int_{Q_t} \big( (\nn v^2 F) :F^1- (\nn v^1 F) :F^2  \big) = \int_{Q_t} \big(  (\nn v^2 F) :F -(\nn v F) :F^2 \big) \\
& = - \int_{Q_t} \big(  \dir (FF^\text{T}) \cdot v^2 + (\nn v F) :F^2 \big),
\end{align*}
so that
\begin{align} \nonumber 
|I_3+I_5| & \le 2 \int_{Q_t} |\nn F| |F| |v^2|  + \int_{Q_t} |\nn v| |F| |F^2|.
\end{align}
Hence, much like in \eqref{diojedna} and \eqref{dioctyri}, we estimate
\begin{align} \nonumber
 \int_{Q_t} |\nn F| |F| |v^2| & \le C \int_0^t \| F \|_{L^2(\om)}^2 \| v^2 \|^{\frac{2s}{s-3}}_{L^s(\om)} +  \int_{Q_t}  \frac{\kappa}{8} | \nn F |^2. \\
 \int_{Q_t} |\nn v| |F| |F^2| & \le C \int_0^t \| F \|_{L^2(\om)}^2 \| F^2 \|^{\frac{2s}{s-3}}_{L^s(\om)} + \int_{Q_t}  \frac18 \big(  \nu | \nn v |^2 + \kappa | \nn F |^2 \big) \nonumber
\end{align}
and finally 
\begin{align} \label{diotriapet} 
|I_3+I_5| & \le C \int_0^t \| F \|_{L^2(\om)}^2 \big( \| v^2 \|^r_{L^s(\om)}  + \| F^2 \|^r_{L^s(\om)} \big) + \int_{Q_t}  \frac38 \big(  \nu | \nn v |^2 + \kappa | \nn F |^2 \big).
\end{align}
Let us continue with $I_2+I_8$. Since
$$\dir (\nn^\text{T} \! M^i \, \nn M^i  ) = \frac12 \nn |\nn M^i |^2 + \nn^\text{T} \! M^i \, \Delta M^i, $$
$I_2$ can be expressed as
\begin{align*}
I_2 = \int_{Q_t} (\nn^\text{T} \! M^1 \Delta M^1) \cdot v^2  + \int_{Q_t} (\nn^\text{T} \! M^2 \Delta M^2  ) \cdot v^1 = \int_{Q_t}  (v^2 \cdot \nabla)M^1\cdot \Delta M^1 + \int_{Q_t}  (v^1 \cdot \nabla)M^2\cdot \Delta M^2. 
\end{align*}
Therefore
\begin{align*}
I_2 + I_8 &= \int_{Q_t} \big( (v^2 \cdot \nabla)M^1\cdot \Delta M^1 + (v^1 \cdot \nabla)M^2\cdot \Delta M^2 - (v^2 \cdot \nabla)M^2\cdot \Delta M^1 - (v^1 \cdot \nabla)M^1\cdot \Delta M^2 \big) \\
&= \int_{Q_t} \big( (v^2 \cdot \nabla)M\cdot \Delta M^1 - (v^1 \cdot \nabla)M\cdot \Delta M^2 \big) = \int_{Q_t} \big( (v^2 \cdot \nabla)M\cdot \Delta M + (\nn (v \cdot \nabla)M)\cdot \nn M^2 \big).
\end{align*}
Thus we can estimate
\begin{align*}
|I_2+I_8| & \le \int_{Q_t} |v^2| |\nn M| |\Delta M|  + \int_{Q_t} |v| |\nn^2 M| |\nn M^2| +  \int_{Q_t} |\nn v| |\nn M| |\nn M^2|.
\end{align*}
Recalling $ \| \nn^2 u \|_2 \le C \big(\| u \|_2+\| \Delta u \|_2\big)$ for any $u \in W^{2,2}(\om)$ with $C=C(\om)$, we may again use the same technique as in \eqref{diojedna} and \eqref{dioctyri}. Note that $r> 2$ and for any $x,y \ge 0$ we have $(x+y)^{3/s} \le x^{3/s} + y^{3/s}$:
\begin{align*} 
\int_{Q_t} |v^2| |\nn M| |\Delta M| & \le C \int_0^t \| v^2 \|_s \| \nn M \|_2^{1- \frac{3}{s}} \| \Delta M \|_2 \bigl( \| M \|_2^{\frac{3}{s}} + \| \Delta M \|_2^{\frac{3}{s}} \bigr) \\
& \le C \int_0^t \Bigl( \| v^2 \|_s^2 \| M \|_2^{\frac{6}{s}} \| \nn M \|_2^{2- \frac{6}{s}} + \| v^2 \|_s^r \| \nn M \|_2^2\Bigr)  + \int_{Q_t} \frac18 | \Delta M |^2 \\
& \le C \int_0^t \Bigl( 1+ \| v^2 \|_s^r \Bigr) \Bigl( \| M \|_2^2 + \| \nn M \|_2^2 \Bigr)  + \int_{Q_t} \frac18 | \Delta M |^2, \displaybreak[0]\\
\int_{Q_t} |v| |\nn^2 M| |\nn M^2| & 
\le C \int_0^t \| \nn M^2 \|_s  \| v \|_2^{1- \frac{3}{s}} \| \nn v \|_2^{\frac{3}{s}} \bigl( \| M \|_2 + \| \Delta M \|_2 \bigr) \\
& \le C \int_0^t \Bigl( 1+ \| \nn M^2 \|_s^r \Bigr) \Bigl( \| v \|_2^2 + \| M \|_2^2 \Bigr)  + \int_{Q_t} \frac18 \bigl( \nu |\nn v|^2 + | \Delta M |^2 \bigr), \displaybreak[0]\\
\int_{Q_t} |\nn v| |\nn M| |\nn M^2| & \le C \int_0^t \|\nn M^2 \|_s \| \nn M \|_2^{1- \frac{3}{s}} \| \nn v \|_2 \bigl( \| M \|_2^{\frac{3}{s}} + \| \Delta M \|_2^{\frac{3}{s}} \bigr) \\
& \le C \int_0^t \Bigl( \| \nn M^2 \|_s^2 \| M \|_2^{\frac{6}{s}} \| \nn M \|_2^{2- \frac{6}{s}} + \| \nn M^2 \|_s^r \| \nn M \|_2^2\Bigr)  + \int_{Q_t} \frac{\nu}{8} | \nn v |^2 \\
& \le C \int_0^t \Bigl( 1+ \| \nn M^2 \|_s^r \Bigr) \Bigl( \| M \|_2^2 + \| \nn M \|_2^2 \Bigr)  + \int_{Q_t} \frac{\nu}{8} | \nn v |^2.
\end{align*}
All in all,
\begin{align} \label{diodvaaosm} 
|I_2+I_8| & \le C \int_0^t \Bigl( 1+ \| v^2 \|_s^r + \| \nn M^2 \|_s^r \Bigr) \Bigl(  \| v \|_2^2 + \| M \|_2^2 + \| \nn M \|_2^2 \Bigr)  + \int_{Q_t} \frac14 \bigl( \nu |\nn v|^2 + | \Delta M |^2 \bigr).
\end{align}
The last two terms $I_7$ and $I_9$, are easily dealt with exactly like in the case $d=2$: First,
\begin{align*}
(|M^1|^2 M^1-|M^2|^2 M^2) \cdot (M^1 - M^2) & \ge 0, \\
\left| |M^1|^2 M^1-|M^2|^2 M^2 \right| &\le \frac32 |M| (|M^1|^2 +|M^2|^2),
\end{align*}
which implies
\begin{align} \label{diosedm}
I_7 & = \frac{1}{\mu^2} \int_{Q_t} | M |^2 - \frac{1}{\mu^2} \int_{Q_t} (|M^1|^2 M^1-|M^2|^2 M^2) \cdot M \le \frac{1}{\mu^2} \int_{Q_t} | M |^2, \\
I_9 & = \frac{1}{\mu^2} \int_{Q_t} | \nn M |^2 + \frac{1}{\mu^2} \int_{Q_t} (|M^1|^2 M^1-|M^2|^2 M^2) \cdot \Delta M \nonumber \\
& \le  \frac{1}{\mu^2} \int_{Q_t} | \nn M |^2 + \frac{3}{2 \mu^2} \int_{Q_t} (|M^1|^2 +|M^2|^2) |M| |\Delta M| \nonumber \\ \nonumber
& \le \frac{1}{\mu^2} \int_{Q_t} | \nn M |^2 + C \int_0^t  \big( \| M^1 \|_8^4 + \| M^2 \|_8^4 \big)\| M\|_4^2  + \int_{Q_t}\frac{1}{8} | \Delta M |^2\\ \label{diodevet}
& \le C \int_0^T  \big( 1+ \| M^1 \|_8^4 + \| M^2 \|_8^4 \big) \big( \| M \|_2^2 + \| \nn M \|_2^2 \big) + \int_{Q_t}\frac{1}{8} | \Delta M |^2.
\end{align}
Because $M^{1,2} \in L^4(0,T;L^8(\om))$, we can apply estimates \eqref{diojedna}--\eqref{diodevet} to relation \eqref{blabel}, to finally obtain
\begin{align*}
f(t) \le \int_0^t h(s)f(s)\, ds
\end{align*}
for some non-negative, integrable function $h$. Gronwall's inequality now yields the claim.
\end{proof}

\appendix

\section{Derivation of the model}  \label{sec:deriv}

This part follows \cite{BenesovaForster_etal2016,ForsterDiss}, which in turn were inspired by energetical variational approaches in modeling complex fluids, as for instance viscoelastic flows \cite{LiuWalkington2001} or nematic liquid crystal flows \cite{SunLiu2009}.

System \eqref{prob1}--\eqref{prob4} is completely phrased in the current configuration using Eulerian coordinates. In the derivation, however, the reference configuration of the magnetoelastic material is required. We briefly review the basic concepts: Let $\widetilde\Omega\subset \R^d$, $d=2,3$ denote the reference configuration. The reference and the current configurations are identified at $t=0$. The elements of $\widetilde\Omega$ are denoted by capital letters $X$ and are referred to as Lagrangian coordinates. The reference configuration is mapped to the current configuration at time $t$ by the deformation (or flow) map
$$x: [0,T]\times \widetilde\Omega\to\Omega,\; (t,X)\mapsto x(t,X), \quad \mbox{ where } x(0,X)=X.$$
The velocity $v:[0,T]\times\Omega \to\R^d$ satisfies 
$$v(t,x(t,X))=\frac{\partial}{\partial t} x(t,X) \quad \mbox{ for all } t>0.$$
 The  deformation gradient in Lagrangian coordinates is defined as 
 $$\widetilde F: [0,T]\times \widetilde\Omega \to \R^{d\times d} \quad \mbox{ with } \widetilde F(t,X):=\frac{\partial x}{\partial X}(t,X)$$
 and it is related to the deformation gradient $F$ in Eulerian coordinates introduced earlier
 through $F(t,x(t,X)) = \widetilde F(t,X)$. By an application of the chain rule, we obtain 
 \begin{align*} 
 F_t + (v\cdot\nabla) F-\nabla v F =0 \quad \mbox{in } [0,T]\times\Omega.
\end{align*}
Following \cite{LiuWalkington2001}, this equation allows to determine the deformation gradient in Eulerian coordinates from the velocity.
Note that our equation \eqref{prob3} enjoys an additional term on the right-hand side, namely $\kappa \Delta F$ for some $\kappa >0$. This is a regularization of the equation that is essential in the existence proof; see \cite{LinLiuZhang2005} and \cite[Remark~5]{BenesovaForster_etal2016} for a corresponding discussion.

From now on, let us assume the mass density $\rho$ to be constantly one. Furthermore, we work in the framework of incompressible materials, hence $\det F =\det \widetilde F = 1$ and
\begin{align} \label{incom}
\dir v=0 \quad \mbox{in } [0,T]\times\Omega,
\end{align}
which is \eqref{prob2}. To derive the momentum equation~\eqref{prob1} and the evolution equation for the magnetization~\eqref{prob4}, we follow the energetic variational approach and start from an action functional $\mathcal{A}(x)$, which is an integral of the difference of the kinetic energy and the Helmholtz free energy $\psi$ of the magnetoelastic material. 
We work with the Helmholtz free energy
\begin{align*}
 \psi(F,M) = & \frac12 \int_{\Omega} |\nabla M|^2~dx 
 - \int_\Omega M\cdot H_{\text{ext}}~dx 
   + \frac{1}{4\mu^2} \int_\Omega  (|M|^2-1)^2~dx + \int_\Omega W(F)~dx,
 \end{align*}
which is a special version of the micromagnetic energy neglecting the anisotropy energy and the stray field contribution; see again \cite{BenesovaForster_etal2016,ForsterDiss} for a discussion. Besides, the Helmholtz free energy contains a length penalization for the magnetization and the elastic energy of the system.  Note that we set various physical constants to one and that the elastic energy density $W:\R^{d\times d} \to \R$ is phrased in Eulerian coordinates. Here we choose $W(F)= \tfrac12 |F|^2$ for simplicity. While the existence proof holds for more general convex $W$, the uniqueness proofs of this article are restricted to this special choice of the elastic energy density; see e.g.\ derivation of~\eqref{problm}.

The transport of the magnetization is based on the assumption that the magnetization $\widetilde M: [0,T] \times \widetilde\Omega \to \R^3$ in the reference configuration and $M$ in the current configuration are related through $\widetilde M(t,X) = M(t,x(t,X))$. Hence $\frac{d}{dt} \widetilde M(t,X) = M_t + (v\cdot \nabla) M$ and the gradient flow equation for $M$ reads
$$ M_t + (v \cdot \nabla)M  = - \frac{\delta\psi }{\delta M} (F,M) = \Delta M + H_{\text{ext}} - \frac{1}{\mu^2}(|M|^2 - 1)M,$$
where $\frac{\delta\psi }{\delta M}$ stands for the variational derivative of $\psi$ with respect to $M$. Note that the above equation is actually our equation~\eqref{prob4}.

In order to derive equation \eqref{prob1}, we transform the action functional to Lagrangian coordinates: 
\begin{align*}
 \mathcal{A}(x) =& \int_0^T \int_{\widetilde \Omega} \frac12 |x_t(t,X)|^2 - \frac12 | \nabla_X \widetilde M(t,X) \widetilde F^{-1}|^2 + \widetilde M(t,X) \cdot H_\text{ext}(t,x(t,X)) \\ & \qquad\qquad -\frac1{4\mu^2}(|\widetilde M(t,X)|^2 -1)^2- |\widetilde F|^2~dX~dt.
\end{align*} 
By Hamilton's principle, the first variation of $\mathcal{A}$ with respect to the flow map $x$ equals zero. Therefore we consider a one-parameter volume-preserving diffeomorphism $x^\varepsilon$ with $x^0=x$ and $\frac{d x^\varepsilon}{d\varepsilon} \big|_{\varepsilon=0} = y$ and $\operatorname{det} \widetilde F^\varepsilon = \operatorname{det} \frac{\partial x^\varepsilon}{\partial X} = 1$ for all $\varepsilon>0$. The latter condition implies $\dir y =0$ since
$$ 0= \frac{d}{d\varepsilon}\operatorname{det} \widetilde F^\varepsilon  \bigg|_{\varepsilon=0} = \operatorname{det} \widetilde F \operatorname{tr}\bigg[\bigg(\frac{d}{d\varepsilon}\frac{\partial x^\varepsilon}{\partial X} \bigg|_{\varepsilon=0} \widetilde F^{-1}\bigg)\bigg] = \operatorname{det} \widetilde F \dir y = \dir y.$$
After some calculations including repeated integration by parts and a transformation to Eulerian coordinates, we obtain
\begin{align*}
 0 =& \frac{d}{d\varepsilon} \mathcal{A}(x^\varepsilon) \bigg|_{\varepsilon=0} =  \int_0^T \int_\Omega \bigg( - (v_t + (v\cdot \nabla)v)  - \dir (\nn^\text{T} \! M\, \nn M) + \nabla^\text{T} H_{\text{ext}} M + \dir(F F^T) \bigg) \cdot y ~dx~dt
\end{align*}
for an arbitrary solenoidal function $y$. The incompressibility condition can be dealt with as a Lagrange multiplier $p_1$, which yields
\begin{align*} 
v_t + (v \cdot \nabla) v + \nn p_1 + \dir (\nn^\text{T} \! M \, \nn M ) - \dir (FF^\text{T}) & = + 
\nabla^\text{T} H_{\text{ext}} M.
\end{align*}

Finally, adding dissipation $\int_\Omega \nu | \nabla v|^2~dx$ to the above model and considering the first variation with respect to the divergence-free velocity vector yields $-\nu \Delta v = \nabla p_2$  for some $p_2$. Defining the total pressure $p:= p_1-p_2$, we obtain~\eqref{prob1}. 

\section{Skeleton proof of the existence theorem} \label{sec:exist}

The idea of Theorem~\ref{thm:exist}, which is based on~\cite{ForsterDiss} and~\cite{LinLiu1995}, uses the Galerkin approach and a fixed-point argument. 
A similar proof is presented in~\cite{BenesovaForster_etal2016}, yet in a more involved version that is attributable to  an increased complexity of the equation for the magnetization. This is why we only sketch the proof here.

 \paragraph{Step 1} (Existence of approximate solution): Velocity field $v$ in the balance of linear momentum~\eqref{prob1} is spatially discretized by means of the eigenfunctions of the Stokes operator on $\om$ that form a sequence $(\xi_m)_{m\in\N} \subset W^{2,\infty}(\om)$ due to the assumed regularity of $\partial \om$.  As such, with $m\in\N$ fixed we look for approximate solutions $v_m$ of the form 
$$v_m(t,x) = \sum_{i=1}^m g_m^i(t)\xi_i(x) \mbox{ in }[0,T]\times \om,$$ 
where the unknown coefficients $g_m^i$ are absolutely continuous solutions of the ordinary differential equation
obtained from \eqref{prob1}, i.e.\ $g_m^i$ solves
\begin{align} \label{bindestrich}
\frac{d}{dt} g_m^i(t) = -\nu\lambda_i g_m^i(t) + \sum_{j,k=1}^m g_m^j(t) g_m^k(t) A_{jk}^i + D_m^i(t), \qquad i=1,\ldots,m,
\end{align}
where
\begin{gather*}
A_{jk}^i = -\int_\Omega (\xi_j(x)\cdot\nabla)\xi_k(x) \cdot \xi_i(x)~dx, \\[0.5em]
 D_m^i(t) = \int_\Omega \big( \nabla^\text{T} M_m\nabla M_m - F_mF_m^\text{T} \big) : \nabla\xi_i~dx.
\end{gather*}
The approximate solutions $F_m$ and $M_m$ are at this point considered to be known:   
For $t_0 \in (0,T]$ and a suitably large $L$, we define
$$V_m(t_0) \ddf \Bigl\{ \sum_{i=1}^m g^i(t) \xi_i(x): g^i \in C([0,T]), \max_{t\in[0,t_0]} \left| g^i(t) \right|\leq L, \ g^i(0) = \int_\Omega v_0(x) \cdot \xi_i(x)~dx,\ 1\le i\le m \Bigr\},$$
a closed and convex subset of $C([0,T]; H_m)$, where $H_m \ddf \spn \{ \xi_1, \ldots, \xi_m \}$. While the momentum equation~\eqref{prob1} is discretized, the equations for the deformation gradient \eqref{prob3} and for the magnetization \eqref{prob4} are left unchanged. By using another Galerkin scheme we can show that \eqref{prob3} and \eqref{prob4} have unique weak solutions $(F_m, M_m)$ for any given $v_m\in V_m(t_0)$ on a time interval $(0,t_1)$ for some $t_1 \leq t_0$. In addition, these solutions are sufficiently regular that we may, in turn, solve equation~\eqref{bindestrich}, on a possibly yet smaller time interval $(0,t^\ast)$. This yields a solution $\widetilde v$ which belongs to $V_m(t^\ast) $. We then show that the range of 
$$\mathcal{L}: V_m(t^\ast) \to V_m(t^\ast), \quad v\mapsto \widetilde v$$
 is precompact in  $C([0,t^\ast];H_m)$ and that $\mathcal{L}$ is continuous on $V(t^\ast)$ in the topology of $C([0,t^\ast];H_m)$. Hence Schauder's fixed point theorem yields existence of $ v_m \in V_m(t^\ast)$ such that $\mathcal{L}(v_m)=v_m$.
 This $v_m$ together with the corresponding $F_m$ and $M_m$ then form the approximate weak solution to the system \eqref{prob1}--\eqref{prob4} on the time interval $[0,t^\ast]$ for a fixed $m\in\N$.
 
\paragraph{Step 2} (Uniform estimates): We test the discrete weak formulation of \eqref{prob1} by
$v_m$, the discrete weak formulation of \eqref{prob3} by $F_m$, and the discrete weak formulation of \eqref{prob4} by $-\Delta M_m + \frac{1}{\mu^2} (| M_m |^2-1)M_m$. This yields the energy inequality
\begin{align*}
& \sup_{t \in (0, t^\ast)} \left( \int_\Omega |v_m(t)|^2 + |F_m(t)|^2 + |\nabla  M_m(t)|^2 + \frac{1}{2\mu^2}(|M_m(t)|^2-1)^2~dx \right)  \\
\nonumber &\qquad \qquad + 2 \int_0^{t^\ast}\int_\Omega \nu |\nabla  v_m|^2 + \kappa |\nabla F_m|^2 + \left| \Delta M_m - \frac{1}{\mu^2} (|M_m |^2-1) M_m \right|^2~dx~ds \\
& \qquad \qquad \qquad \qquad\leq  \int_\Omega | v_0|^2 + |F_0|^2 + |\nabla M_0|^2 + \frac{1}{2\mu^2}(| M_0|^2-1)^2~dx.
\end{align*}
We may use this inequality as a stepping stone to justify extendibility of the approximate solutions onto the entire time interval $(0,T)$ without having to change the above energy inequality. By this means we obtain a bound on the solutions in reflexive spaces independently of $m\in\N$.

\paragraph{Step 3} (Uniform estimates of the time derivatives): In order to achieve strong convergence of $(v_m, F_m, M_m)$ in the corresponding Bochner spaces needed in order to pass to the limit in the nonlinearities of the system, we have yet to prove uniform bounds on time derivatives. These bounds are obtained directly from the approximated system in the standard way as for the Navier-Stokes equations, cf., e.g., \cite{Robinson-etal,temam1977navier}.

\paragraph{Step 4} (Convergence of the discrete system): By the Aubin-Lions lemma and the estimates we have derived up to this point, we may take a subsequence of approximate solutions such that 
\begin{eqnarray*}
& v_m\to v &\quad\text{in } L^2(0,T;L^2(\Omega;\R^d)), \\
& \nabla v_m \rightharpoonup \nabla v &\quad\text{in } L^2(0,T;L^2(\Omega;\R^{d\times d})), \\
& F_m\to F &\quad\text{in } L^2(0,T;L^2(\Omega;\R^{d\times d})), \\
& \nabla F_m \rightharpoonup \nabla F &\quad\text{in } L^2(0,T;L^2(\Omega;\R^{d\times d\times d})),\\
& \nabla M_m\to \nabla M &\quad\text{in } L^2(0,T;L^2(\Omega;\R^{3\times d})),\\
& \Delta M_m\rightharpoonup \Delta M &\quad\text{in } L^2(0,T;L^2(\Omega;\R^3)).
\end{eqnarray*}
A passage to the limit in $m$ is now possible. We find a weak solution to system \eqref{prob1}--\eqref{prob4} in the sense of Definition~\ref{defwsol}, which also can be shown to attain the initial data as required by~\eqref{icond} and inequality~\eqref{enineq}. This then finishes the proof of Theorem~\ref{thm:exist}.


\def\ocirc#1{\ifmmode\setbox0=\hbox{$#1$}\dimen0=\ht0 \advance\dimen0
  by1pt\rlap{\hbox to\wd0{\hss\raise\dimen0
  \hbox{\hskip.2em$\scriptscriptstyle\circ$}\hss}}#1\else {\accent"17 #1}\fi}


\begin{thebibliography}{1}

\bibitem{BenesovaForster_etal2016}
B.~Bene\v{s}ov\'{a}, J.~Forster, C.~Liu, and
  A.~Schl\"omerkemper.
\newblock {Existence of weak solutions to a model for magnetoelasticity}.
\newblock arXiv:1608.02992.

\bibitem{PAMM2015}
B.~Bene\v{s}ov\'{a}, J.~Forster, C.~Garc\'ia-Cervera, C.~Liu, and A.~Schl\"omerkemper.
\newblock Analysis of the Flow of Magnetoelastic Materials.
\newblock {\em PAMM Proc. Appl. Math. Mech.}, 16:663--664, 2016.

\bibitem{CarbouEfendievFabrie2011}
 G.~Carbou, M.~Efendiev, and P.~Fabrie.
 \newblock Global weak solutions for the Landau-Lifschitz equation with magnetostriction.
 \newblock  {\em Math. Methods Appl. Sci.}, 34:1274--1288, 2011.

\bibitem{MR2665030}
A.~Cheskidov, S.~Friedlander, and R.~Shvydkoy.
\newblock On the energy equality for weak solutions of the 3{D}
  {N}avier-{S}tokes equations.
\newblock In {\em Advances in mathematical fluid mechanics}, pages 171--175.
  Springer, Berlin, 2010.
  
\bibitem{ChipotShafrir_etal2009}
M.~Chipot, I.~Shafrir, V.~Valente, and G.~Caffarelli.
\newblock On a  hyperbolic-parabolic system arising in magnetoelasticity.
\newblock  {\em J. Math. Anal. Appl.}, 352:120--131, 2009.
  

\bibitem{DemouliniStuartTzavaras2012}
 S.~Demoulini, D.~M.~A.~Stuart, and A.~E.~Tzavaras.
 \newblock Weak-strong uniqueness of dissipative measure-valued solutiojs for polyconvex elastodynamics.
 \newblock {\em Arch. Rational Mech. Anal.}, 205:927--961, 2012.
 
 \bibitem{DeSimoneDolzmann1998}
 A.~DeSimone and G.~Dolzmann.
  \newblock Existence of minimizers for a
  variational problem in two-dimensional nonlinear magnetoelasticity.
  \newblock {\em Arch. Rational Mech. Anal.}, 144:107--120, 1998.

\bibitem{DeSimoneJames2002}
 A.~DeSimone and R.~James.
 \newblock A constrained theory of
  magnetoelasticity. 
  \newblock {\em J. Mech. Phys. Solids}, 50:283--320, 2002.
 
\bibitem{DKMO}
 A.~DeSimone, R.~Kohn, S.~M\"uller, and F.~Otto.
 \newblock Recent analytical
  developments in micromagnetics.
  \newblock in: {\em The Science of Hysteresis}, G.~Bertotti
  and I.~Mayergoyz, eds., vol.~II, Academic Press, 2006, pp.~269--383.
 
 \bibitem{MR0316915}
E.~B. Fabes, B.~F. Jones, and N.~M. Rivi{\`e}re.
\newblock The initial value problem for the {N}avier-{S}tokes equations with
  data in {$L^{p}$}.
\newblock {\em Arch. Rational Mech. Anal.}, 45:222--240, 1972.

\bibitem{FeireislJinNovotny2012}
E.~Feireisl, B.~J.~Jin, and A.~Novotn{\' y}.
\newblock  Relative Entropies, Suitable Weak Solutions, and Weak-Strong Uniqueness for the Compressible Navier-Stokes System.
\newblock {\em J. Math. Fluid Mech.} 14:717--730, 2012.

\bibitem{ForsterDiss}
J.~Forster.
\newblock {Variational Approach to the Modeling and Analysis of Magnetoelastic Materials}.
\newblock Disseration, University of W{\" u}rzburg, 2017.

\bibitem{Garcia-Cervera2007}
C.~Garc\'{\i}a-Cervera.
\newblock Numerical micromagnetics: a review.
\newblock  {\em Bol. Soc. Esp. Mat. Apl., S$\vec{\text{e}}$MA}, 39:103--135, 2007.

\bibitem{GwiazdaSwierczewska-GwiazdaWiedemann2015}
P.~Gwiazda, A.~{\' S}wierczewska-Gwiazda, E.~Wiedemann.
\newblock  Weak-strong uniqueness for measure-valued solutions of some compressible fluid models. 
\newblock {\em Nonlinearity}, 28:3873--3890, 2015.

\bibitem{Heywood}
 J.~G.~Heywood.
\newblock On uniqueness questions in the theory of viscous flows.
\newblock {\em Acta Math.},  136:61--102, 1976.
\bibitem{HuWu2015}
 X.~Hu, and H.~Wu.
 \newblock Long-time behavior and weak-strong uniqueness for incompressible viscoelastic flows.
 \newblock {\em Discr. Cont. Dyn. Sys. A}, 35:3437--3461, 2015.  

\bibitem{JamesKinderlehrer1993}
 R.~D.~James, and D.~Kinderlehrer.
 \newblock Theory of magnetostriction with
  applications to ${T}b_x {D}y_{1-x} {F}e_2$.
  \newblock {\em Phil. Mag. Part B}, 68:237--274, 1993.
  
\bibitem{KruzikStefanelliZeman2015}
M.~{Kru\v{z}\'{\i}k}, and U.~{Stefanelli}, and J.~{Zeman}.
\newblock Existence results for incompressible magnetoelasticity.
\newblock {\em Discrete Contin. Dyn. Syst.}, 35:2615--2623, 2015.  

\bibitem{KruzikProhl2006}
M.~Kru{\v{z}}\'{\i}k, and A.~Prohl.
\newblock Recent developments in the modeling, analysis, and numerics of ferromagnetism.
\newblock {\em SIAM Rev.}, 48:439--483, 2006.

\bibitem{LinLiu1995}
 F.-H.~Lin, and C.~Liu.
\newblock Nonparabolic dissipative systems modeling the flow of liquid crystals.
\newblock {\em Comm. Pure Appl. Math.}, 48:501--537, 1995.


\bibitem{LinLiuZhang2005}
F.-H. Lin, C.~Liu, and P.~Zhang. 
\newblock On hydrodynamics of viscoelastic fluids.
\newblock {\em Commun. Pure Appl. Math.}, 58:1437--1471, 2005.

\bibitem{LiuWalkington2001}
C.~Liu, and N.~Walkington. 
\newblock An Eulerian description of fluids
  containing viscoelastic particles.
 \newblock {\em Arch. Ration. Mech. Anal.}, 159:229--252, 2001.
 
\bibitem{PaicuZarnescu2011}
M.~Paicu, and A.~Zarnescu.
\newblock Global existence and regularity for the full coupled Navier-Stokes and $Q$-tensor system.
\newblock {\em SIAM J. Math. Anal.}, 43:2009--2049, 2011.
 
 \bibitem{MR0126088}
G.~Prodi.
\newblock Un teorema di unicit\`a per le equazioni di {N}avier-{S}tokes.
\newblock {\em Ann. Mat. Pura Appl. (4)}, 48:173--182, 1959.

\bibitem{ribaud}
F.~Ribaud.
\newblock A remark on the uniqueness problem for the weak solutions of
              {N}avier-{S}tokes equations.
\newblock {\em Ann. Fac. Sci. Toulouse Math.}, 6:225--238, 2002.

\bibitem{Robinson-etal}
J.~C.~Robinson, J.~L.~Rodrigo, W.~Sadowski.
\newblock {\em The Three-Dimensional Navier-Stokes Equations}.
\newblock Cambridge University Press, 2016.

\bibitem{ShiZhang2016}
 J.~Shi, and Q.~Zhang.
 \newblock Remarks on the uniqueness of weak solutions for the 3D viscous magneto-hydrodynamics equations in $B^1_{\infty,\infty}$.
 \newblock {\em Z. Angew. Math. Phys.}, 67:7, 2016.
 
\bibitem{SunLiu2009}
 H.~Sun, and C.~Liu.
 \newblock On energetic variational approaches in modeling the nematic liquid crystal flows.
 \newblock {\em Discr. Cont. Dyn. Sys}, 32:455--475, 2009.
 
\bibitem{temam1977navier}
R.~Temam.
\newblock {\em Navier-Stokes Equation: Theory and Numerical Analysis}.
\newblock North-Holland, 1977.


\bibitem{Zhou2012}
Y.~Zhou.
\newblock Regularity and uniqueness for the 3D incompressible Navier-Stokes equations with damping.
\newblock {\em Appl. Math. Letters} 25:1822--1825. 


\end{thebibliography}
\end{document}